\newtheoremstyle{mystyle}
{3pt}
{3pt}
{\rm}
{}
{\bfseries}
{.}
{.5em}
{}
\newcommand{\RNum}[1]{\uppercase\expandafter{\romannumeral #1\relax}}
\newtheorem{lemma}{Lemma}
\newtheorem{corollary}{Corollary}[section]
\newtheorem{remark}{Remark}[section]
\newtheorem{theorem}{Theorem}
\theoremstyle{definition}
\newtheorem{example}{Example}
\begin{document}
\begin{frontmatter}
\title{Constructing Cycles in Isogeny Graphs of Supersingular Elliptic Curves}

\author[CAS,UCAS]{Guanju Xiao}
\ead{gjXiao@amss.ac.cn}

\author[CAS,UCAS]{Lixia Luo}
\ead{luolixia@amss.ac.cn}

\author[CAS,UCAS]{Yingpu Deng}
\ead{dengyp@amss.ac.cn}

\address[CAS]{Key Laboratory of Mathematics Mechanization, NCMIS, Academy of Mathematics and Systems Science, Chinese Academy of Sciences, Beijing 100190, People's Republic of China}
\address[UCAS]{School of Mathematical Sciences, University of Chinese Academy of Sciences, Beijing 100049, People's Republic of China}

\begin{abstract}
Loops and cycles play an important role in computing endomorphism rings of supersingular elliptic curves and related cryptosystems. For a supersingular elliptic curve $E$ defined over $\mathbb{F}_{p^2}$, if an imaginary quadratic order $O$ can be embedded in $\text{End}(E)$ and a prime $L$ splits into two principal ideals in $O$, we construct loops or cycles in the supersingular $L$-isogeny graph at the vertices which are next to $j(E)$ in the supersingular $\ell$-isogeny graph where $\ell$ is a prime different from $L$. Next, we discuss the lengths of these cycles especially for $j(E)=1728$ and $0$. Finally, we also determine an upper bound on primes $p$ for which there are unexpected $2$-cycles if $\ell$ doesn't split in $O$.

\end{abstract}

\begin{keyword}
Elliptic Curves, Isogeny Graphs, Loops, Cycles
\end{keyword}
\end{frontmatter}
\section{Introduction}
Elliptic curves over finite fields play an important role in cryptography. A recent research area, called isogeny-based cryptography, studies cryptosystems whose security is based on the difficulty of finding a path in isogeny graphs of supersingular elliptic curves. Moreover, the only known quantum algorithm for this problem, due to Biasse, Jao and Sankar \cite{MR3296936}, has exponential complexity. Until now, the efficient algorithms in \cite{MR2695524,MR3451433} to compute endomorphism rings or isogenies between supersingular elliptic curves use the isogeny graph, which is a Ramanujan graph introduced in \cite{MR1027904}. These algorithms have exponential complexity.\par
Let $\mathbb{F}_p$ be a finite field of characteristic $p$ with $p > 3$, and let $\overline{\mathbb{F}}_p$ denote its algebraic closure. Let $\ell$ be a prime different from $p$. The supersingular isogeny graph $\mathcal{G}_{\ell}(\overline{\mathbb{F}}_p)$ is a directed graph whose vertices are the $\overline{\mathbb{F}}_{p}$-isomorphism classes of supersingular elliptic curves defined over $\overline{\mathbb{F}}_{p}$, and whose directed arcs represent $\ell$-isogenies (up to a certain equivalence) defined over $\overline{\mathbb{F}}_p$. We label the vertices of $\mathcal{G}_{\ell}(\overline{\mathbb{F}}_p)$ with their $j$-invariants.\par
Cryptographic applications based on the hardness of computing isogenies between supersingular elliptic curves were first proposed in 2006. Charles, Goren and Lauter constructed a hash function in \cite{MR2496385} from the supersingular isogeny graph $\mathcal{G}_{\ell}(\overline{\mathbb{F}}_{p})$. Finding collisions for the CGL hash function is connected to finding loops or cycles in the supersingular isogeny graph $\mathcal{G}_{\ell}(\overline{\mathbb{F}}_{p})$.\par
In 2011, Jao and De Feo \cite{MR2931459} (see also \cite{MR3259113}) presented a key agreement scheme whose security is based on the hardness of finding paths in the isogeny graph $\mathcal{G}_{\ell}(\overline{\mathbb{F}}_p)$ for small $\ell$ (typically $\ell = 2, 3$). There is also a submission \cite{2019} to the NIST PQC standardization competition based on supersingular isogeny problems. Moreover, Eisentr\"{a}ger et al. \cite{MR3794837} proved that finding paths in the supersingular $\ell$-isogeny graph is equivalent to computing the endomorphism rings of supersingular elliptic curves. Constructing cycles in supersingular isogeny graphs is important in algorithmic number theory and cryptography.\par
Adj et al. \cite{MR3872948} defined the supersingular isogeny graph $\mathcal{G}_{\ell} (\mathbb{F}_{p^2})$ whose vertices are (representatives of) the $\mathbb{F}_{p^2}$-isomorphism classes of supersingular elliptic curves defined over $\mathbb{F}_{p^2}$, and whose directed arcs represent degree-$\ell$ $\mathbb{F}_{p^2}$-isogenies between the elliptic curves. Adj et al. \cite{MR3872948} described clearly the three subgraphs of $\mathcal{G}_{\ell}(\mathbb{F}_{p^2})$ whose vertices correspond to supersingular elliptic curves $E$ over $\mathbb{F}_{p^2}$ with $t=p^2+1-\#E(\mathbb{F}_{p^2}) \in \{ 0,-p,p \}$, and they also proved the following result:
$$\mathcal{G}_{\ell}(\overline{\mathbb{F}}_{p}) \cong \mathcal{G}_{\ell}(\mathbb{F}_{p^2},2p) \cong \mathcal{G}_{\ell}(\mathbb{F}_{p^2},-2p).$$
Moreover, Adj et al. and Ouyang-Xu \cite{MR3947814} proved the following results about the loops at the vertices $1728$ and $0$ in $\mathcal{G}_{\ell}(\mathbb{F}_{p^2},2p)$. For $\ell>3$ a prime integer, if $p\equiv 3\ \text{mod 4}$ and $p>4 \ell$, there are either 2 or 0 loops at 1728 if $\ell\equiv  1 \ \text{mod } 4$ or $3 \ \text{mod} \ 4$ respectively; and if $p\equiv 2\ \text{mod} \ 3$ and $p>3 \ell$, there are either 2 or 0 loops at 0 if $\ell\equiv 1 \ \text{mod} \ 3$ or $2 \ \text{mod} \ 3$ respectively. Li-Ouyang-Xu \cite{MR4019136} also described the neighborhood of vertices $1728$ and $0$ in $\mathcal{G}_{\ell}(\overline{\mathbb{F}}_{p})$.\par
The methods in \cite{MR3947814} and \cite{MR4019136} are based on the knowledge of the endomorphism rings of the supersingular elliptic curves corresponding to the vertices $0$ and $1728$. For a general supersingular elliptic curve $E$, it is very difficult to compute the endomorphism ring $\text{End}(E)$, but we may know a non-trivial endomorphism of $E$. Assume an imaginary quadratic order $O$ can be embedded in the endomorphism ring of $E$, we construct loops or cycles in the supersingular $L$-isogeny graph if a prime $L$ splits into two principal ideals in $O$. We also discuss the lengths of these cycles. Since the results for $j=1728$ and $0$ are more explicit, we will discuss these special cases separately. For a prime $p$, the vertices in different supersingular isogeny graphs are the same, and our results show a deeper connection between these supersingular isogeny graphs. In this paper, a $m$-cycle means a simple cycle (as defined in \cite{MR2695524}) with $m$ vertices and a loop is a $1$-cycle. We will denote by $\ell$ and $L$ two different primes.\par
The remainder of this paper is organized as follows. In Section 2, we provide preliminaries on elliptic curves over finite fields, maximal orders of $B_{p,\infty}$ and modular polynomials. We construct loops and cycles in Section 3 and discuss the lengths of these cycles in Section 4. In Section 5, we determine an upper bound on primes $p$ for which there are unexpected $2$-cycles in $\mathcal{G}_{L}(\overline{\mathbb{F}}_{p})$. Finally, we make a conclusion in Section 6.\par

\section{Preliminaries}

\subsection{Elliptic Curves over Finite Fields}
We recall basic facts about elliptic curves over finite fields. The general references are \cite{MR2514094,MR2404461}. In the remainder of this paper, $p$ and $\ell$ will denote different prime integers with $p>3$.\par
Let $\mathbb{F}_q$ be a finite extension of $\mathbb{F}_p$ and $\overline{\mathbb{F}}_p$ be the algebraic closure of $\mathbb{F}_p$. An elliptic curve $E$ over a finite field $\mathbb{F}_q$ is defined by a Weierstrass equation $Y^2=X^3+aX+b$ where $a,b\in \mathbb{F}_q$ and $4a^3+27b^2\neq 0$. The chord-and-tangent addition law makes of $E(\mathbb{F}_q)=\left\{(x,y)\in \mathbb{F}_q ^2:y^2=x^3+ax+b\right\} \cup \left\{\infty \right\}$ an abelian group, where $\infty$ is the point at infinity. For any integer $n\geq 2$ with $p\nmid n$, the group of $n$-torsion points on $E$ is isomorphic to $\mathbb{Z}_n \oplus \mathbb{Z}_n$. In particular, if $n$ is prime then $E$ has exactly $n+1$ distinct subgroups of order $n$.\par
Let $E_1$ and $E_2$ be elliptic curves defined over $\mathbb{F}_q$. An isogeny from $E_1$ to $E_2$ is a morphism $\phi:E_1 \rightarrow E_2$ satisfying $\phi(\infty)=\infty$. In this paper, the isogenies are always nonconstant. An isogeny $\phi$ is a surjective group homomorphism with finite kernel. Every $\mathbb{F}_q$-isogeny can be represented as $\phi = (r_1(X), r_2(X) \cdot Y)$ where $r_1$, $r_2 \in \mathbb{F}_q (X)$. Let $r_1(X)=p_1(X)/q_1(X)$, where $p_1, q_1 \in \mathbb{F}_q[X]$ with $\text{gcd}(p_1,q_1)=1$. The degree of $\phi$ is $\text{max}(\text{deg}\ p_1, \text{deg}\ q_1)$ and $\phi$ is said to be separable if $r_1 '(X) \neq 0$. Note that all isogenies of prime degree $\ell\neq p$ are separable. If $\phi:E_1 \rightarrow E_2$ is an isogeny of degree $m$, then there exists a unique isogeny $\hat{\phi}:E_2 \rightarrow E_1$ satisfying $\hat{\phi} \circ \phi=[m]$ and $\phi \circ \hat{\phi}=[m]$, where $[m]$ is the multiplication-by-$m$ map with degree $m^2$. We call $\hat{\phi}$ the dual of $\phi$. The following lemma is in chapter 3 of \cite{MR2514094}.
\begin{lemma}
	Let $\phi:E_1\rightarrow E_2$ and $\psi:E_1\rightarrow E_3$ be nonconstant isogenies, and assume that $\phi$ is separable. If $\text{ker}(\phi) \subseteq \text{ker} (\psi)$, then there is a unique isogeny $\lambda :E_2 \rightarrow E_3$ satisfying $\psi =\lambda \circ \phi$.
\end{lemma}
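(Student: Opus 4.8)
The plan is to translate the statement into the language of function fields and then invoke Galois theory. A nonconstant isogeny $\phi\colon E_1\to E_2$ over $\overline{\mathbb{F}}_q$ induces an injection of function fields $\phi^*\colon\overline{\mathbb{F}}_q(E_2)\hookrightarrow\overline{\mathbb{F}}_q(E_1)$, $f\mapsto f\circ\phi$, and $\phi$ is separable exactly when this finite extension is separable, in which case its degree equals $\deg\phi=\#\ker(\phi)$ (the cardinality of the kernel always being the separable degree). Conversely, by the standard anti-equivalence between smooth projective curves and their function fields, any $\overline{\mathbb{F}}_q$-algebra embedding $\overline{\mathbb{F}}_q(E_3)\hookrightarrow\overline{\mathbb{F}}_q(E_2)$ is $\mu^*$ for a unique nonconstant morphism $\mu\colon E_2\to E_3$. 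Hence it is enough to prove the inclusion $\psi^*\overline{\mathbb{F}}_q(E_3)\subseteq\phi^*\overline{\mathbb{F}}_q(E_2)$ inside $\overline{\mathbb{F}}_q(E_1)$: the morphism $\lambda$ with $\lambda^*=(\phi^*)^{-1}\circ\psi^*$ then satisfies $\psi^*=\phi^*\circ\lambda^*=(\lambda\circ\phi)^*$, so $\psi=\lambda\circ\phi$.

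The first step is to identify $\mathrm{Gal}\bigl(\overline{\mathbb{F}}_q(E_1)/\phi^*\overline{\mathbb{F}}_q(E_2)\bigr)$. For each $T\in\ker(\phi)$ the translation $\tau_T\colon P\mapsto P+T$ is an automorphism of $E_1$ with $\phi\circ\tau_T=\phi$, so $\tau_T^*$ is an automorphism of $\overline{\mathbb{F}}_q(E_1)$ fixing $\phi^*\overline{\mathbb{F}}_q(E_2)$ pointwise; and $T\mapsto\tau_T^*$ is injective. Thus we have produced $\#\ker(\phi)$ automorphisms fixing the base field, which already equals the degree $[\overline{\mathbb{F}}_q(E_1):\phi^*\overline{\mathbb{F}}_q(E_2)]$ by separability of $\phi$. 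Therefore the extension is Galois and its group is precisely $\{\tau_T^*:T\in\ker(\phi)\}$. This is the only place the hypothesis that $\phi$ is separable is used, and it is essential.

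The second step is the key computation. Let $f\in\overline{\mathbb{F}}_q(E_3)$ and $T\in\ker(\phi)$; since $\ker(\phi)\subseteq\ker(\psi)$ we have $\psi\circ\tau_T=\psi$, because $\psi(P+T)=\psi(P)+\psi(T)=\psi(P)$. Hence $\tau_T^*(\psi^*f)=f\circ\psi\circ\tau_T=f\circ\psi=\psi^*f$, so every element of $\psi^*\overline{\mathbb{F}}_q(E_3)$ is fixed by the whole Galois group, and the Galois correspondence gives $\psi^*\overline{\mathbb{F}}_q(E_3)\subseteq\phi^*\overline{\mathbb{F}}_q(E_2)$, as required. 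The morphism $\lambda\colon E_2\to E_3$ obtained above is nonconstant and maps $\phi(\infty)=\infty$ to $\psi(\infty)=\infty$, hence is an isogeny; uniqueness is immediate since $\phi$ is surjective, so $\lambda\circ\phi=\lambda'\circ\phi$ forces $\lambda=\lambda'$.

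I expect the main obstacle to be the Galois-theoretic bookkeeping of the first step — verifying that the translations by $\ker(\phi)$ exhaust the automorphism group, which rests on the equality between the separable degree of $\phi$ and $\#\ker(\phi)$; once that is in place the rest is formal. A more hands-on alternative that sidesteps Galois theory is to construct $E_2$ explicitly as the quotient $E_1/\ker(\phi)$ (via V\'elu's formulas) and verify directly that $\psi$ factors set-theoretically through the quotient map and that this factorization is a morphism, but the function-field argument is shorter.
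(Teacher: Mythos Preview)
Your proof is correct. The paper does not supply its own proof of this lemma; it simply cites Chapter~3 of Silverman's \emph{The Arithmetic of Elliptic Curves}, where this is Corollary~III.4.11. Your argument is precisely the standard Galois-theoretic proof given there: identify $\mathrm{Gal}\bigl(\overline{\mathbb{F}}_q(E_1)/\phi^*\overline{\mathbb{F}}_q(E_2)\bigr)$ with the translations $\{\tau_T^*:T\in\ker(\phi)\}$ using separability, then use $\ker(\phi)\subseteq\ker(\psi)$ to show $\psi^*\overline{\mathbb{F}}_q(E_3)$ lies in the fixed field. So there is nothing to compare---you have reproduced the referenced proof.
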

An endomorphism of $E$ is an isogeny from $E$ to itself. The Frobenius map $\pi :(x,y) \mapsto (x^q,y^q)$ is an inseparable endomorphism. The characteristic polynomial of $\pi$ is $x^2-tx+q$, where $t$ is the trace of $\pi$ and the Hasse's Theorem implies that $\vert t \vert \leq 2 \sqrt{q}$ and $\# E(\mathbb{F}_q)=q+1-t$. Tate's Theorem asserts that $E_1$ and $E_2$ are $\mathbb{F}_q$-isogenous if and only if  $\#E_1(\mathbb{F}_q)=\# E_2(\mathbb{F}_q)$. It is well known that $E$ is supersingular (resp. ordinary) if and only if $p\mid t$ (resp. $p \nmid t$).\par
The $j$-invariant of $E$ is $j(E)=1728\cdot 4a^3/(4a^3+27b^2)$. One can easily check that $j(E)=0$ if and only if $a=0$, and  $j(E)=1728$ if and only if $b=0$.
Different elliptic curves with the same $j$-invariant are isomorphic over the algebraic closure $\overline{\mathbb{F}}_p$.
An automorphism of $E$ is an isomorphism from $E$ to itself. The group of all automorphisms of $E$ that are defined over $\overline{\mathbb{F}}_p$ is denoted by $\text{Aut}(E)$. As we know (see \cite[Chapter 3.10]{MR2514094}), $\text{Aut}(E)\cong \{\pm 1\}$ if $j(E)\neq 0,1728$. If $j(E)=1728$, then $\text{Aut}(E)$ is a cyclic group of order 4 with generator $\theta:(x,y)\mapsto (-x,iy)$ where $i$ is a primitive fourth root of unity. If $j(E)=0$, then $\text{Aut}(E)$ is a cyclic group of order 6 with generator $\omega:(x,y)\mapsto (\eta x,-y)$ where $\eta$ is a primitive third root of unity.\par
Moreover, the $j$-invariant of any supersingular elliptic curve over $\overline{\mathbb{F}}_p$ is proved to be in $\mathbb{F}_{p^2}$ \cite{MR2514094} and it is called a supersingular $j$-invariant. From now on, we suppose that $E$ is supersingular, and, since $j(E)\in \mathbb{F}_{p^2}$, we assume that $E$ is defined over $\mathbb{F}_{p^2}$. Schoof \cite{MR914657} determined the number of isomorphism classes of elliptic curves over a finite field. The number of supersingular $j$-invariants is $[\frac{p}{12}]+\epsilon$, where $\epsilon =0,\ 1,\ 1,\ 2$ if $p\equiv 1,\ 5,\ 7,\ 11 \ (\text{mod}\ 12)$ respectively.\par
The supersingular isogeny graph $\mathcal{G}_{\ell}(\overline{\mathbb{F}}_p)$ is a Ramanujan graph (see \cite{MR2496385}) whose vertices are the supersingular $j$-invariants and edges are equivalent classes of $\ell$-isogenies defined over $\overline{\mathbb{F}}_p$. Let $\phi_1,\phi_2:E(j_1)\to E(j_2)$ be two $\ell$-isogenies defined over $\overline{\mathbb{F}}_p$. We say that $\phi_1$ and $\phi_2$ are equivalent if they have the same kernel, or equivalently, if there exist $\rho_2\in \text{Aut}(E(j_2))$ such that $\phi_2=\rho_2 \circ \phi_1$. Adj et al. proved $\mathcal{G}_{\ell}(\mathbb{F}_{p^2},2p) \cong \mathcal{G}_{\ell}(\mathbb{F}_{p^2},-2p) \cong \mathcal{G}_{\ell}(\overline{\mathbb{F}}_{p})$. In the remainder of this paper, we will use the symbol $\mathcal{G}_{\ell}(\overline{\mathbb{F}}_{p})$.\par

\subsection{Endomorphism Ring and Quaternion Algebra}
If $E$ is supersingular elliptic curve, the endomorphism ring $\text{End}(E)$ is a maximal order of $B_{p,\infty}$ where $B_{p,\infty}$ is a quaternion algebra \cite{MR580949,MR2869057.pdf} defined over $\mathbb{Q}$ and ramified at $p$ and $\infty$. The reduced trace Trd and the reduced norm Nrd of $\alpha \in B_{p,\infty}$ are defined as:
$$\text{Trd}(\alpha)=\alpha + \bar{\alpha}, \quad \text{Nrd}(\alpha)=\alpha \bar{\alpha}$$
where $\bar{\alpha}$ is the canonical involution of $\alpha$.\par
An order $\mathcal{O}$ of $B_{p,\infty}$ is a subring of $B_{p,\infty}$ which is also a lattice, and it is called a maximal order if it is not properly contained in any other order. Two orders $\mathcal{O}_1$ and $\mathcal{O}_2$ are equivalent if and only if there exists $\alpha \in B_{p,\infty}^{*}$ such that $\mathcal{O}_1=\alpha^{-1} \mathcal{O}_2 \alpha$.
For $\mathcal{O}$ a maximal order of $B_{p,\infty}$, let $I$ be a left ideal of $\mathcal{O}$. Define the left order $\mathcal{O}_L(I)$ and the right order $\mathcal{O}_R(I)$ of $I$ by
$$\mathcal{O}_L(I)=\left\{x\in B_{p,\infty}:xI\subseteq I \right \}, \quad \mathcal{O}_R(I)=\left\{x\in B_{p,\infty}:Ix \subseteq I \right \}$$
Moreover, $\mathcal{O}_L(I)=\mathcal{O}$ and $\mathcal{O}_R(I)=\mathcal{O}^{'}$ is also a maximal order, in which case we say that $I$ connects $\mathcal{O}$ and $\mathcal{O}^{'}$. Moreover, if $\mathcal{O}$ is maximal, then $\mathcal{O}_R(I) = \mathcal{O}$ if and only if $I$ is principal. The reduced norm of $I$ can be defined as $$\text{Nrd}(I)=\text{gcd}(\{\text{Nrd}(\alpha) | \alpha \in I \}).$$
Fix a maximal order $\mathcal{O}$. Any left ideal of $\mathcal{O}$ with reduced norm $\ell$ can be written as $I=\mathcal{O} \ell+\mathcal{O} \alpha$ where $\alpha \in \mathcal{O}$ is such that $\ell \mid \text{Nrd}(\alpha)$. For any $I_1$, $I_2$ left ideals of $\mathcal{O}$, $I_1$ and $I_2$ belong to the same ideal class if and only if there exists $\mu \in B_{p,\infty}^{*}$ such that $I_1=I_2 \mu$. Moreover, if $\text{Nrd}(I_1)=\text{Nrd}(I_2)$, then $\text{Nrd}(\mu)=1$. Let $X_{\ell}$ be the set of all left $\mathcal{O}$-ideals of reduced norm $\ell$, there are $\ell+1$ ideals in $X_{\ell}$. Given a quadratic order $O$ and a maximal order $\mathcal{O}$ of $B_{p, \infty}$, we say that $O$ is optimally embedded in $\mathcal{O}$ if $O=\mathcal{O}\cap K$ for some subfield $K\subseteq B_{p,\infty}$.\par
A theorem by Deuring \cite{MR5125} gives an equivalence of categories between the supersingular $j$-invariants and the maximal orders in the quaternion algebra $B_{p,\infty}$. Furthermore, if $E$ is
 an elliptic curve with $\text{End}(E)=\mathcal{O}$, there is a one-to-one correspondence between isogenies $\phi:E\to E^{'}$ and left $\mathcal{O}$-ideals $I$. More details on the correspondence can be found in Chapter 42 of \cite{MR2869057.pdf}.

\subsection{$j$-Function and Modular Polynomials}
In this subsection, we present some properties of the $j$-function. The reader can refer to \cite{MR3236783, MR2404461} for more details.
Given $\tau$ in the upper half plane $\mathcal{H}$, we get a lattice $[1,\tau]$ and the $j$-function $j(\tau)$ is defined by
$$j(\tau)=j([1,\tau])=1728 \frac{(1+240\sum_{k=1}^{\infty}\frac{k^3q^k}{1-q^k})^3}{(1+240\sum_{k=1}^{\infty}\frac{k^3q^k}{1-q^k})^3-(1-504\sum_{k=1}^{\infty}\frac{k^5q^k}{1-q^k})^2}.$$\par
Let $K$ be an imaginary quadratic field. If $\tau \in K \setminus \mathbb{Q}$, then $L=[1,\tau]$ is a lattice in $K$. We can define the order $O$ of $L$ to be the set of elements $\lambda \in K$ such that $\lambda L \subseteq L$. It is well known that the elliptic curve $E(j(\tau))$ defined over $\mathbb{C}$ with $j$-invariant $j(\tau)$ has complex multiplication by $O$. Cox lists the 13 orders with class number one and the corresponding $j$-invariants in \S 12 of \cite{MR3236783}.\par
Deuring's reducing and lifting theorems in \cite{MR5125} describe the structures of endomorphism rings which are preserved in passing between elliptic curves over every field.\par
For any $\tau \in \mathcal{H}$, the complex numbers $j(\tau)$ and $j(N\tau)$ are the $j$-invariants of elliptic curves defined over $\mathbb{C}$ that are related by an isogeny whose kernel is a cyclic group of order $N$. The minimal polynomial $\Phi_N(Y)$ of the function $j(Nz)$ over the field $\mathbb{C}(j(z))$ has coefficients that are integer polynomials in $j(z)$. If we replace $j(z)$ with $X$, we obtain the modular polynomial $\Phi_N \in \mathbb{Z}[X,Y]$ which is symmetric in $X$ and $Y$ and has degree $N \prod_{\ell\mid N} (1+\frac{1}{\ell})$ in both variables.\par
When $N$ is a prime integer, every $N$-isogeny is cyclic, and we have
\begin{center}
  $\Phi_{N}(j(E_1),j(E_2))=0  \Longleftrightarrow E_1$ and $E_2$ are $N$-isogenous.
\end{center}
This moduli interpretation remains valid over every field, even those of positive characteristic.

\section{Constructing Cycles}
In this section, we will construct loops or cycles at some vertices in supersingular isogeny graphs. We assume that $L$ and $\ell$ are different primes.\par
Let $E$ be a supersingular elliptic curve defined over $\mathbb{F}_{p^2}$, and assume that an imaginary quadratic order $\mathbb{Z}[\tau]$ can be optimally embedded in $\mathcal{O}\cong \text{End}(E)$. Suppose $E[\ell]=\langle P,Q\rangle$, where $P$ and $Q$ are two linearly independent $\ell$-torsion points. Recall that there are $\ell +1$ subgroups of $E[\ell]$ with order $\ell$. If $G_n$ is one of these $\ell +1$ subgroups, then $\phi_n: E \to E_n (\cong E/G_n)$ is an $\ell$-isogeny with kernel $G_n$. Let $\mathcal{G}_L(\overline{\mathbb{F}}_p,\ell+1)$ denote the subgraph of $\mathcal{G}_L(\overline{\mathbb{F}}_p)$ which consists of $j(E_n)$ for $n=0,\ldots,\ell$. We have the following theorem.
\begin{theorem}
If an imaginary quadratic order $\mathbb{Z}[\tau]$ is optimally embedded in $\mathcal{O} \cong \text{End}(E)$, then
there are loops or cycles at $j(E_n)$ in $\mathcal{G}_L(\overline{\mathbb{F}}_p,\ell+1)$ for every $n\in \{0,\ldots,\ell \}$ where $L$ splits into two principal ideals in $\mathbb{Z}[\tau]$.
\end{theorem}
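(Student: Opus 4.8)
The plan is to exploit the Deuring correspondence together with the fact that $L$ splits into principal ideals in $\mathbb{Z}[\tau]$ to produce, for each neighbour $E_n$, an endomorphism of $E_n$ of degree a power of $L$ whose kernel is cyclic, which then translates into a closed walk (loop or cycle) in $\mathcal{G}_L(\overline{\mathbb{F}}_p,\ell+1)$. First I would fix the optimal embedding $\mathbb{Z}[\tau]\hookrightarrow \mathcal{O}\cong\operatorname{End}(E)$, so that $\tau$ acts as an endomorphism $\iota(\tau)$ of $E$. Write $L\mathbb{Z}[\tau]=\mathfrak{L}\overline{\mathfrak{L}}$ with $\mathfrak{L}=(\lambda)$ principal, say $\lambda=a+b\tau$ with $\operatorname{Nrd}(\lambda)=L$ when viewed inside $B_{p,\infty}$. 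The element $\iota(\lambda)\in\operatorname{End}(E)$ has degree $L$, so its kernel is one of the $L+1$ order-$L$ subgroups of $E[L]$, and composing with its dual (or iterating $\iota(\lambda)$) gives a cyclic $L$-power isogeny $E\to E$; more precisely $\iota(\lambda)$ itself is already a loop at $j(E)$ in $\mathcal{G}_L(\overline{\mathbb{F}}_p)$ since it is a cyclic $L$-isogeny from $E$ to $E$.

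The key step is to transport this loop to each neighbour $E_n$. Here I would use the $\ell$-isogeny $\phi_n\colon E\to E_n$ and its dual $\widehat{\phi_n}$, and consider the endomorphism $\psi_n := \phi_n\circ\iota(\lambda)^{k}\circ\widehat{\phi_n}\in\operatorname{End}(E_n)$ for a suitable exponent $k$. Its degree is $\ell^2 L^{k}$; the factor $\ell^2=\deg[\ell]$ comes from $\phi_n\widehat{\phi_n}=[\ell]$ on $E$, but after dividing by $[\ell]$ on $E_n$ — which is legitimate exactly when $\ell^2\mid \psi_n$ in $\operatorname{End}(E_n)$, equivalently when $\iota(\lambda)^k$ maps $\ker\widehat{\phi_n}$ into... — no, the clean route is different: I would instead show directly that the left $\operatorname{End}(E)$-ideal $I_n$ corresponding to $\phi_n$ and the principal ideal $\operatorname{End}(E)\iota(\lambda)$ interact so that $\mathcal{O}_n:=\operatorname{End}(E_n)$ contains the conjugated order $\widehat{\phi_n}$-pullback, and that $\mathbb{Z}[\tau]$ (or the order $\mathbb{Z}[\ell\tau]$, if $\ell\nmid[\mathcal{O}:\mathbb{Z}[\tau]]$ fails) optimally embeds in $\mathcal{O}_n$ as well because $\phi_n$ has degree $\ell$ prime to $L$. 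Once $\mathbb{Z}[\tau]$ embeds in $\operatorname{End}(E_n)$, the \emph{same} argument as for $E$ produces the cyclic $L$-isogeny $\iota_n(\lambda)\colon E_n\to E_n$, i.e. a loop at $j(E_n)$ in $\mathcal{G}_L(\overline{\mathbb{F}}_p)$. The subtlety of "loop or cycle" arises because $\iota_n(\lambda)$ might not be primitive of degree $L$: if $\lambda$ happens to be divisible by an integer or its kernel is non-cyclic, iterating $\mathfrak{L}^m$ until it becomes principal-with-cyclic-kernel yields a cycle of length $m$ rather than a loop; since $\mathfrak{L}$ is already principal we get $m=1$ generically, but I would phrase the conclusion to allow the degenerate cases (e.g. when $\ell\mid\operatorname{disc}\mathbb{Z}[\tau]$ forces passage to a suborder whose $L$-split ideal is only principal after squaring).

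The main obstacle I anticipate is verifying that $\mathbb{Z}[\tau]$ really does embed into $\operatorname{End}(E_n)$ for \emph{every} $n$, and controlling whether the embedding stays optimal. Going from $E$ to $E_n$ via $\phi_n$ conjugates $\operatorname{End}(E)\otimes\mathbb{Q}$ isomorphically but moves the maximal order; an element $\iota(\tau)$ of $\operatorname{End}(E)$ need not land in $\operatorname{End}(E_n)$ — rather $\widehat{\phi_n}\iota(\tau)\phi_n/\ell$ is in $\operatorname{End}(E_n)$ only up to the $\ell$-denominator. The fix is that $\phi_n\circ\iota(\tau)\circ\widehat{\phi_n}\in\operatorname{End}(E_n)$ has trace $\ell\operatorname{Trd}(\tau)$ and norm $\ell^2\operatorname{Nrd}(\tau)$, hence generates $\mathbb{Z}[\ell\tau]\subseteq\operatorname{End}(E_n)$; whether one can divide by $\ell$ depends on the splitting behaviour of $\ell$ in $\mathbb{Z}[\tau]$, which is precisely why the theorem's hypothesis involves only $L$ and the statement tolerates "loops or cycles" (the cycle length absorbing the possible loss). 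So the heart of the argument is: (i) a clean Deuring-correspondence bookkeeping lemma showing $\mathbb{Z}[\ell\tau]\hookrightarrow\operatorname{End}(E_n)$ for all $n$; (ii) the observation that $L$ still splits into principal ideals in $\mathbb{Z}[\ell\tau]$ after at most one squaring, since $L\neq\ell$ so $L$ is unramified in the extension $\mathbb{Z}[\tau]\supseteq\mathbb{Z}[\ell\tau]$ of conductor $\ell$; and (iii) translating a principal split ideal $\mathfrak{L}^m=(\lambda_m)$ with $\gcd(m\cdot\text{(stuff)},\,\ell)=1$ into the cyclic $L^m$-endomorphism $\iota_n(\lambda_m)$ of $E_n$, which by the modular-polynomial criterion of Section 2.3 is exactly an $m$-cycle through $j(E_n)$ in $\mathcal{G}_L(\overline{\mathbb{F}}_p,\ell+1)$, closing at $j(E_n)$ because the ideal is principal.
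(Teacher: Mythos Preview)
Your proposal has a genuine gap: you never establish that the intermediate vertices of your closed $L$-walk lie in the subgraph $\mathcal{G}_L(\overline{\mathbb{F}}_p,\ell+1)$, i.e.\ among the $\ell+1$ neighbours $E_0,\dots,E_\ell$ of $E$. From the embedding $\mathbb{Z}[\ell\tau]\hookrightarrow\mathcal{O}_n$ and the principality of $\mathfrak{L}'^m$ you obtain an endomorphism $\iota_n(\lambda_m)\in\operatorname{End}(E_n)$ of degree $L^m$; this gives a closed non-backtracking $L$-walk at $j(E_n)$ in the \emph{full} graph $\mathcal{G}_L(\overline{\mathbb{F}}_p)$, but nothing in your argument forces the curves $E_n/E_n[\mathfrak{L}'^i]$ for $1\le i<m$ to be $\ell$-isogenous to $E$. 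The modular-polynomial criterion you invoke only detects $L$-isogenies, not membership in the neighbour set. The paper's proof supplies exactly this missing ingredient by a direct construction: since $\deg[a+b\tau]=L$ is prime to $\ell$, the endomorphism $[a+b\tau]$ of $E$ \emph{permutes} the set $\{G_0,\dots,G_\ell\}$ of order-$\ell$ subgroups, and the $L$-isogeny $\psi_{n,+}$ is built as $\phi_{n_1}\circ[a+b\tau]\circ\hat{\phi}_n$ divided by $[\ell]$, so its target $E_{n_1}=E/G_{n_1}$ with $G_{n_1}=[a+b\tau](G_n)$ is visibly another neighbour. Ironically, this is precisely the ``$\phi_n\circ\iota(\lambda)\circ\hat{\phi}_n$, then factor out $[\ell]$'' route you sketched first and then abandoned; had you tracked where the factored isogeny lands (namely $E/[a+b\tau](G_n)$ rather than $E_n$), you would have recovered the paper's argument.

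Two smaller issues: your claim that $\mathfrak{L}'$ becomes principal in $\mathbb{Z}[\ell\tau]$ ``after at most one squaring'' is false --- its order $m$ in $\mathrm{Cl}(\mathbb{Z}[\ell\tau])$ can be any divisor of $h'/h=\ell-\bigl(\tfrac{\mathbb{Z}[\tau]}{\ell}\bigr)$ (up to the unit index), as the paper's Example~4 illustrates with $m\in\{1,2,3,6\}$ for $\ell=5$. And even once you know the walk lies in the subgraph, extracting a \emph{simple} cycle that actually passes through $j(E_n)$ (rather than merely some cycle somewhere along the walk) requires the kind of case analysis with both $\psi_{n,+}$ and $\psi_{n,-}$ and their duals that the paper carries out; your finiteness-of-class-group argument alone does not give this.
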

\begin{proof}
We assume $\tau=\sqrt{-d}$ where $d$ is a positive integer, and the other case $\tau=\frac{1+\sqrt{-d}}{2}$ can be proved similarly.\par
If $\mathbb{Z}[\sqrt{-d}]$ is optimally embedded in $\text{End}(E)$, then $\mathbb{Z}[\ell \sqrt{-d}]$ can be embedded in $\mathcal{O}_n$ where $\mathcal{O}_n$ is the endomorphism ring of $E_n$. Since $L$ splits into two principal ideals in $\mathbb{Z}[\sqrt{-d}]$, we can write $L=a^2+db^2$ with $a,b \in \mathbb{Z}$.\par
If $\ell \mid b$, then $L$ can be written as $L=\alpha \bar{\alpha}$ with $\alpha, \ \bar{\alpha}\in \mathbb{Z}[\ell \sqrt{-d}]$. In this case, there are at least 2 loops at $j(E_n)$ for $n=0,\ldots,\ell$  in $\mathcal{G}_L(\overline{\mathbb{F}}_p,\ell+1)$.\par
If $\ell\nmid b$, then $[a\pm b\sqrt{-d}](E[\ell]) = E[\ell]$ since $\text{deg}([a\pm b\sqrt{-d}])=L\neq \ell$. In other words, $[a\pm b\sqrt{-d}]$ is a bijection on $E[\ell]$, so that $[a\pm b\sqrt{-d}]$ acts as a permutation on the set of subgroups $\{ G_i \}_{i=0,\ldots,\ell}$. Considering the following two isogenies:
   $$\Psi_{n,\pm}:\xymatrix{&E_n \ar[r]^{\hat{\phi}_n} &E \ar[r]^{[a\pm b\sqrt{-d}]} &E}$$
where $\hat{\phi}_n$ is the dual isogeny of $\phi_n$, we have $\Psi_{n,+}(E_n[\ell])=[a + b\sqrt{-d}](G_n)$ (resp. $\Psi_{n,-}(E_n[\ell])=[a - b\sqrt{-d}](G_n)$) is the kernel of some $\phi_{n_1}$ (resp. $\phi_{n_2}$). The isogenies $\phi_{n_1}\circ \Psi_{n,+}:E_n \to E_{n_1}$ and $\phi_{n_2}\circ \Psi_{n,-}:E_n \to E_{n_2}$ factor through $[\ell]\in \text{End}(E_n)$ by Lemma 1, so there are two $L$-isogenies $\psi_{n,+}:E_n \to E_{n_1}$ and $\psi_{n,-}:E_n \to E_{n_2}$ such that $\phi_{n_1}\circ \Psi_{n,+}=\psi_{n,+} \circ [\ell]$ and $\phi_{n_2}\circ \Psi_{n,-}=\psi_{n,-} \circ [\ell]$. Since $\text{ker} (\psi_{n,+}) \subseteq \text{ker}([L]) \cap \text{ker}([\ell(a+b\sqrt{-d})])$ and $\text{gcd}(\text{deg}([L]),\text{deg}([\ell(a+b\sqrt{-d})]))=L$, the kernel ideal of $\psi_{n,+}$ is $I_{n,1}=(L,\ell(a+b\sqrt{-d}))$. Similarly, the kernel ideal of $\psi_{n,-}$ is $I_{n,2}=(L,\ell(a-b\sqrt{-d}))$. \par
If $E_{n_1}$ is isomorphic to $E_n$. Therefore $I_{n,1}$ is a principal left ideal of $\mathcal{O}_n$ and $\psi_{n,+}$ is an endomorphism of $E_n$. Because $I_{n,2}$ is the conjugate ideal of $I_{n,1}$, $I_{n,2}$ is also principal and $\psi_{n,-}$ is an endomorphism of $E_n$. In this case, $j(E_{n_2})=j(E_{n_1})=j(E_n)$ and we construct two loops at $j(E_n)$ in $\mathcal{G}_L(\overline{\mathbb{F}}_p,\ell+1)$.\par
If $E_{n_1}$ is not isomorphic to $E_n$ and $E_{n_1}$ is isomorphic to $E_{n_2}$, then $j(E_{n_1})=j(E_{n_2})$ and $\psi_{n,+}$ and $\psi_{n,-}$ are two different $L$-isogenies since $I_{n,1} \neq I_{n,2}$. Therefore $\hat{\psi}_{n,-}\circ \psi_{n,+}$ and $\hat{\psi}_{n,+}\circ \psi_{n,-}$ are two $2$-cycles at $j(E_n)$ in $\mathcal{G}_L(\overline{\mathbb{F}}_p,\ell+1)$. Furthermore, we have $\mathcal{O}_{n_1}=\mathcal{O}_{n_2}$ and the right order of $I_{n,1}$ and $I_{n,2}$ is $\mathcal{O}_{n_1}$, so the right order of $I_{n_1,1}$ and $I_{n_1,2}$ is $\mathcal{O}_{n}$. Since the corresponding isogenies of $I_{n_1,1}$ and $I_{n_1,2}$ are $\psi_{n_1,+}$ and $\psi_{n_1,-}$, we also construct two $2$-cycles at $j(E_{n_1})$ in $\mathcal{G}_L(\overline{\mathbb{F}}_p,\ell+1)$.\par
If $E_n$, $E_{n_1}$ and $E_{n_2}$ are not isomorphic, we denote the target elliptic curve of $\psi_{n_1,+}$ by $E_{n_3}$. We have that $E_{n_3}$ is not isomorphic to $E_n$ or $E_{n_1}$, otherwise there exists a contradiction with the above two cases. If $E_{n_3}$ is isomorphic to $E_{n_2}$, then $\psi_{n_2,+} \circ \psi_{n_1,+} \circ \psi_{n,+}$ is a cycle through $j(E_n)$, $j(E_{n_1})$ and $j(E_{n_2})$ in $\mathcal{G}_L(\overline{\mathbb{F}}_p,\ell+1)$. If $E_{n_3}$ is not isomorphic to $E_{n_2}$, we denote the target elliptic curves of $\psi_{n_3,+}$ by $E_{n_4}$. We have that $E_{n_4}$ is not isomorphic to $E_{n_1}$ or $E_{n_3}$, Moreover, we claim that $E_{n_4}$ is not isomorphic to $E_n$. If $E_{n_4}$ is isomorphic to $E_n$, then the dual of $\psi_{n_3,+}$ is $\psi_{n,-}$ which is the dual of $\psi_{n_2,+}$. This means that $E_{n_3}$ is isomorphic to $E_{n_2}$, so we get a contradiction. If $E_{n_4}$ is isomorphic to $E_{n_2}$, then we construct a cycle through $j(E_n)$, $j(E_{n_1})$, $j(E_{n_3})$ and $j(E_{n_2})$ in $\mathcal{G}_L(\overline{\mathbb{F}}_p,\ell+1)$. The following process is similar. Since there are at most $\ell+1$ vertices, we construct cycles at $j_n$ in $\mathcal{G}_L(\overline{\mathbb{F}}_p,\ell+1)$.\par
\end{proof}
\begin{remark}
  If $\mathbb{Z}[\sqrt{-d}]$ can be embedded in the endomorphism ring of $E_n$, then $I_{n,1}$ and $I_{n,2}$ are principal and there exist two loops at $j(E_n)$ in $\mathcal{G}_L(\overline{\mathbb{F}}_p,\ell+1)$.
\end{remark}
The following example illustrates Theorem 1.
\begin{example}
Let $p=3461$ and $\ell=5$. Since $\big( \frac{-7}{3461} \big)=-1$, $j(\sqrt{-7})=255^3 \equiv 3185 \bmod {3461} $ is a supersingular $j$-invariant in $\mathbb{F}_p$. Let $\mathbb{F}_{p^2}=\mathbb{F}_p(\beta)$ where $\beta^2+\beta+1=0$ in $\mathbb{F}_{p^2}$. The solutions of $\Phi_5(X,3185)$ are $j_0=819$, $j_1=2402$, $j_2=2591\beta+1415$, $j_3=1039\beta+2586$, $j_4=870\beta+2285$ and $j_5=2422\beta+1547$ in $\mathbb{F}_{p^2}$.
By computing modular polynomials \cite{MR2869057.org}, we get the subgraph $\mathcal{G}_L(\overline{\mathbb{F}}_p,\ell+1)$ which consists of these 6 vertices.\par
For $L=11=2^2+7\cdot 1^2$, we have the following graph $\mathcal{G}_{11}(\overline{\mathbb{F}}_{3461},6)$.
\[\xymatrix{
&&j_0\ar[dl] \ar[dr]&\\
&j_4 \ar[rr] \ar[ur] &&j_2 \ar[ll] \ar[ul]\\}
\xymatrix{
&&j_1\ar[dl] \ar[dr]&\\
&j_3 \ar[rr] \ar[ur] &&j_5 \ar[ll] \ar[ul]\\}\]\par
For $L=23=4^2+7\cdot 1^2$, we have the following graph $\mathcal{G}_{23}(\overline{\mathbb{F}}_{3461},6)$.
\[\xymatrix{
&&j_5 \ar[r] \ar[dl] &j_2 \ar[l] \ar[dr] &\\
&j_0 \ar[ur] \ar[dr] &&&j_1 \ar[ul] \ar[dl]\\
&&j_3 \ar[r] \ar[ul] &j_4 \ar[l] \ar[ur] & \\ }\]
\end{example}

Next, we will discuss the usefulness of Theorem 1 in CGL hash function and the imaginary quadratic order $O$ which can be embedded in $\mathcal{O}\cong \text{End}(E)$.
The following example implies that we can find cycles in supersingular isogeny graphs by Theorem 1.
\begin{example}
  Let $p=12601\equiv 6 \ \text{mod} \ 11$, we have that $j(\frac{1+\sqrt{-11}}{2})=-32^3 \equiv 5035$ is a supersingular $j$-invariant in $\mathbb{F}_p$. Moreover $4825$ is a root of $H_{-44}(x)$ in $\mathbb{F}_p$ and $j_0=5035$, $j_1=7022\beta+1350$ and $j_2=5579\beta+1350$ are the vertices next to $4825$ in $\mathcal{G}_{2}(\overline{\mathbb{F}}_p)$ where $\beta^2+11=0$ in $\mathbb{F}_{p^2}$. We have the following subgraph of $\mathcal{G}_{47}(\overline{\mathbb{F}}_p)$ which consists of $j_0$, $j_1$ and $j_2$.
  \[\xymatrix{ j_0 \ar@(ul,dl) \ar@(ur,dr)
 && j_1\ar@/^/@<-0.5mm>[r] \ar@/^/@<0.5mm>[r] &j_2\ar@/^/@<-0.5mm>[l] \ar@/^/@<0.5mm>[l] }\]
 Charles, Goren and Lauter proved in \cite{MR2496385} that if $\mathcal{G}_{2}(\overline{\mathbb{F}}_p)$ has no $2$-cycles then $p\equiv 1 \ \text{mod} \ 840$. In this example, we construct $2$-cycles in $\mathcal{G}_{47}(\overline{\mathbb{F}}_p)$ for $p=12601 \equiv 1\ \text{mod} \ 840$.
\end{example}
Based on Theorem 1 and  these examples, we can say more about the collision resistance of the hash function defined in \cite{MR2496385}. In addition to choosing an appropriate prime $p$ as discussed in \cite{MR2496385}, the prime $\ell$ cannot split in imaginary quadratic orders which can be embedded in the endomorphism rings.\par
The order $\mathbb{Z}[\tau]$ plays an important role in Theorem 1. What can we say about the discriminant of $\mathbb{Z}[\tau]$? Kaneko \cite{Kan} proved that the endomorphism ring of any supersingular elliptic curve defined over $\mathbb{F}_p$ contains an imaginary quadratic order $O_{-D}$ with discriminant $-D$ satisfying $D\le \frac{4}{\sqrt{3}} \sqrt{p}$. Recently, Love and Boneh \cite{http20200228} proved that the endomorphism ring of any supersingular elliptic curve contains an imaginary quadratic order $O_{-D}$ with $D< 2p^{\frac{2}{3}}+1$.

\section{Lengths of These Cycles}
We will discuss the lengths of the cycles which we construct in Section 3. We also assume that $\mathbb{Z}[\tau]$ is optimally embedded in $\mathcal{O} \cong \text{End}(E)$. As in the proof of Theorem 1, $\psi_{n,+}:E_n \to E_{n_1}$ and $\psi_{n,-}:E_n \to E_{n_2}$ are two $L$-isogenies. If $\mathbb{Z}[\tau]$ can be embedded in $\mathcal{O}_n \cong \text{End}(E_n)$, then $I_{n,1}$ and $I_{n,2}$ are principal and we construct two loops at $j(E_n)$ in $\mathcal{G}_L(\overline{\mathbb{F}}_p,\ell+1)$. If $L\mathbb{Z}[\ell \tau]=\mathfrak{L}\mathfrak{L}'$, we define $m$ to be the order of $\mathfrak{L}$ in the class group of $\mathbb{Z}[\ell \tau]$. Let $D$ be the absolute value of the discriminant of $\mathbb{Z}[\tau]$, then $D=4d$ if $\tau=\sqrt{-d}$ and $D=d$ if $\tau=\frac{1+\sqrt{-d}}{2}$ with $d \equiv \ 3 \ \text{mod} \ 4$.
\begin{theorem}
  Suppose $j(E)\neq 0,1728$. Assume that $\mathbb{Z}[\tau]$ and $\mathbb{Z}[\ell \tau]$ are optimally embedded in $\mathcal{O}$ and $\mathcal{O}_n$ respectively where $E$ and $E_n$ are $\ell$-isogenous. If $p>\ell^2 L D$ and $\ell$ does not split in $\mathbb{Z}[\tau]$, then there exist two $m$-cycles at $j(E_n)$ in $\mathcal{G}_L(\overline{\mathbb{F}}_p,\ell+1)$ where $L$ splits into two principal ideals in $\mathbb{Z}[\tau]$.
\end{theorem}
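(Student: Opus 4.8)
The plan is to run the cycle construction of Theorem~1 more carefully, keeping track of \emph{which} ideal classes appear at each step, and to show that the walk $j(E_n)\to j(E_{n_1})\to j(E_{n_2})\to\cdots$ produced by iterating $\psi_{\cdot,+}$ is exactly the orbit of the ideal $\mathfrak{L}$ under repeated multiplication in the class group of $\mathbb{Z}[\ell\tau]$, so that its length is the order $m$ of $\mathfrak{L}$. First I would recall from the proof of Theorem~1 that the kernel ideal of $\psi_{n,+}$ is $I_{n,1}=(L,\ell(a+b\sqrt{-d}))$ (with $L=a^2+db^2$, $\ell\nmid b$), which under the Deuring correspondence corresponds to the split prime $\mathfrak{L}$ above $L$ in $\mathbb{Z}[\ell\tau]\hookrightarrow\mathcal{O}_n$; since by hypothesis $\mathbb{Z}[\ell\tau]$ is optimally embedded in $\mathcal{O}_n$, this embedding (and hence $\mathfrak{L}$) is well-defined, and the same quadratic order $\mathbb{Z}[\ell\tau]$ is optimally embedded in every $\mathcal{O}_{n_k}$ along the walk because the connecting ideals all have norm a power of $L$, coprime to $\ell D$ — here is where $p>\ell^2 L D$ enters, guaranteeing (via the bound $D\le \frac{4}{\sqrt 3}\sqrt p$-type estimates, or directly by a norm-form argument) that no \emph{extra} embedding of a larger order can interfere and that the curves $E_{n_k}$ stay inside $\mathcal{G}_L(\overline{\mathbb{F}}_p,\ell+1)$.

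Next I would argue that composing $\psi_{n_k,+}$ along the walk multiplies kernel ideals: the composite $\psi_{n_{k-1},+}\circ\cdots\circ\psi_{n,+}$ has kernel ideal $\mathfrak{L}^k$ (as an ideal of $\mathbb{Z}[\ell\tau]$, extended to the relevant maximal order), exactly as in the standard dictionary between horizontal isogenies and the class-group action. Therefore $E_{n_k}\cong E_n$ as soon as $\mathfrak{L}^k$ is principal in $\mathbb{Z}[\ell\tau]$, i.e.\ as soon as $k$ is a multiple of $m=\operatorname{ord}(\mathfrak{L})$, and not before, since if $\mathfrak{L}^k$ were principal for some $0<k<m$ the corresponding composite would already be an endomorphism, contradicting minimality of $m$. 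This shows the forward walk closes up after exactly $m$ steps, giving an $m$-cycle at $j(E_n)$; applying the same reasoning to $\psi_{\cdot,-}$ (whose kernel ideals are the conjugates $\overline{\mathfrak{L}}^k$, and $\operatorname{ord}(\overline{\mathfrak{L}})=\operatorname{ord}(\mathfrak{L})=m$) gives the second $m$-cycle. The hypotheses $j(E)\neq 0,1728$ and "$\ell$ does not split in $\mathbb{Z}[\tau]$" are used to rule out the degenerate collapses (loops, or the two cycles coinciding): non-split $\ell$ forces $\mathbb{Z}[\tau]$ \emph{not} to be embeddable in $\mathcal{O}_n$ (so $\mathfrak{L}$ is genuinely considered in $\mathbb{Z}[\ell\tau]$, not $\mathbb{Z}[\tau]$, per Remark~1 and the discussion opening Section~4), while $j\neq 0,1728$ makes $\operatorname{Aut}$ trivial so that distinct kernel ideals give distinct edges and the two $m$-cycles are honest simple cycles.

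The main obstacle I anticipate is the bookkeeping that the quadratic order $\mathbb{Z}[\ell\tau]$ remains \emph{optimally} embedded in each $\mathcal{O}_{n_k}$ encountered along the walk — in other words, that the class-group action of $\mathbb{Z}[\ell\tau]$ is free and transitive on the relevant set of vertices, with no "jumping" to a different (larger or smaller) order — and that all the $E_{n_k}$ genuinely lie in the neighborhood subgraph $\mathcal{G}_L(\overline{\mathbb{F}}_p,\ell+1)$. This is exactly the point where the quantitative hypothesis $p>\ell^2 L D$ does its work: one shows that if two of these maximal orders were connected by an ideal of norm dividing $L^m$ \emph{and} also contained a common suborder strictly larger than $\mathbb{Z}[\ell\tau]$, the resulting quaternion element would have reduced norm too small relative to $p$, which is impossible in $B_{p,\infty}$. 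I would isolate this as a lemma (essentially "horizontality of the $\psi_{\cdot,\pm}$") and then the $m$-cycle count follows formally from the class-group computation above.
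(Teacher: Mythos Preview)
Your class-group framing is correct and matches the paper's definition of $m$ as the order of $\mathfrak{L}$ in the class group of $\mathbb{Z}[\ell\tau]$, and the forward direction (the walk closes after $m$ steps) is fine. The gap is in the reverse direction. You argue that the walk cannot return before step $m$ ``since if $\mathfrak{L}^k$ were principal for some $0<k<m$ the corresponding composite would already be an endomorphism, contradicting minimality of $m$''; but the implication runs the wrong way. What you must show is that if the composite is an endomorphism of $E_n$ (equivalently, $\mathcal{O}_n\mathfrak{L}^k$ is a principal left $\mathcal{O}_n$-ideal) then $\mathfrak{L}^k$ was already principal in $\mathbb{Z}[\ell\tau]$. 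A generator $\gamma\in\mathcal{O}_n$ of $\mathcal{O}_n\mathfrak{L}^k$ has $\text{Nrd}(\gamma)=L^k$; if $\gamma\notin\mathbb{Q}(\tau)$ then Kaneko's inequality applied to the two orders $\mathbb{Z}[\ell\tau]$ and $\mathbb{Z}[\gamma]$ inside $\mathcal{O}_n$ yields only $p\le \ell^2 D\, L^k$, which for $k>1$ does not contradict $p>\ell^2 L D$. So the ``horizontality'' lemma you plan to isolate handles the step $k=1$ but does not give simplicity of the full cycle.

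The paper avoids this by pushing the argument from $\mathcal{O}_n$ back to $\mathcal{O}=\text{End}(E)$. From the construction in Theorem~1, each vertex in the walk is $E_{n_k}=E/G_{n_k}$ with $G_{n_k}=[a+b\sqrt{-d}]^{\,k}(G_n)\subset E[\ell]$, so the walk on curves is the image of the orbit of $G_n$ under the permutation that $[a+b\sqrt{-d}]$ induces on the $\ell+1$ order-$\ell$ subgroups of $E$. The paper then proves two things. First, $[s+t\sqrt{-d}](G_n)=G_n$ iff $\ell\mid t$, which is elementary and pins the orbit length on subgroups at exactly $m$. Second, the map $G\mapsto E/G$ is injective on isomorphism classes: if $E/G_s\cong E/G_t$ with $s\neq t$ there is $\mu\in B_{p,\infty}^*$ of reduced norm $1$, $\mu\neq\pm 1$, with $\beta:=\ell\mu\in\mathcal{O}$; since $\ell$ does not split in $\mathbb{Z}[\tau]$ and $j(E)\neq 0,1728$ forces $\mathbb{Z}[\tau]^*=\{\pm 1\}$, the only norm-$\ell^2$ elements of $\mathbb{Z}[\tau]$ are $\pm\ell$, so $\beta\notin\mathbb{Z}[\tau]$, and Kaneko applied to $\mathbb{Z}[\tau],\mathbb{Z}[\beta]\subset\mathcal{O}$ gives $p\le\ell^2 D$, a contradiction. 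The key point is that this Kaneko step produces an element of norm $\ell^2$ in $\mathcal{O}$, independent of $k$ and of $L$, which is why a single uniform bound on $p$ suffices. This is also where the non-split hypothesis on $\ell$ is actually spent, not (as you suggest) in preventing $\mathbb{Z}[\tau]$ from embedding in $\mathcal{O}_n$.
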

\begin{proof}
Since $j(E)\neq 0,1728$, we have $\mathbb{Z}[\tau]\neq \mathbb{Z}[i], \mathbb{Z}[\frac{1+\sqrt{-3}}{2}]$ and the unit group of $\mathbb{Z}[\tau]$ is $\{\pm 1\}$. If $\tau=\sqrt{-d}$ and write $L=(a+b\sqrt{-d})(a-b\sqrt{-d})$, then $m$ is the smallest positive integer such that $(a+b\sqrt{-d})^m=x+y\sqrt{-d}$ with $\ell \mid y$.\par
If $\ell \mid b$, then $m=1$ and we construct two loops at $j(E_n)$ in $\mathcal{G}_L(\overline{\mathbb{F}}_p,\ell+1)$. We assume $\ell \nmid b$ in the following.\par
We claim that $E_n$ is not isomorphic to $E_{n_1}$ or $E_{n_2}$. If $E_n$ is isomorphic to $E_{n_1}$, then the right order of $I_{n,1}=(L,\ell(a+b\sqrt{-d}))$ is $\mathcal{O}_n$ and $I_{n,1}$ is principal. There exists an element $\alpha \in \mathcal{O}_n$ such that $\text{Nrd}(\alpha)=L$. We assume that $\mathbb{Z}[\ell \tau]$ is optimally embedded in $\mathcal{O}_n$ and $\ell \nmid b$, so $\alpha \notin \mathbb{Z}[\ell \tau]$. The absolute value $D'$ of the discriminant of $\mathbb{Z}[\alpha]$ satisfies $D'\leq 4 \text{Nrd}(\alpha)=4L$. By Theorem 2 in \cite{Kan}, we have $4p<4 \ell^2DL$ since two different imaginary orders $\mathbb{Z}[\ell \tau]$ and $\mathbb{Z}[\alpha]$ can be embedded in $\mathcal{O}_n$. If $p>\ell^2 L D$, then such $\alpha$ does not exist and $I_{n,1}$ is not principal.\par
If $m=2$, then $[a \pm b\sqrt{-d}]^2(G_n)=G_n$. Since $j(E_n)\neq j(E_{n_1})$, $\psi_{n_1,+}\circ
 \psi_{n,+}:E_n\to E_{n_1} \to E_n$ is a $2$-cycle at $j(E_n)$ in $\mathcal{G}_L(\overline{\mathbb{F}}_p,\ell+1)$. In this case, $\psi_{n,-}=\hat{\psi}_{n_1,+}$ and $j(E_{n_1})=j(E_{n_2})$.\par
If $m>2$, then $[a+b\sqrt{-d}]^m(G_n)=[x+y\sqrt{-d}](G_n)=G_n$ and $j(E_{n_1})\neq j(E_{n_2})$. The composition of the following $3m$ isogenies
  $$\xymatrix{&E_n \ar[r]^{\hat{\phi}_n} &E \ar[r]^{[a+ b\sqrt{-d}]} &E \ar[r]^{\phi_{n_1}} &E_{n_1} \ar[r]^{\hat{\phi}_{n_1}} &E \ar[r]^{[a+ b\sqrt{-d}]} &\ldots \ar[r] &E \ar[r]^{[a+ b\sqrt{-d}]} &E \ar[r]^{\phi_{n}} &E_n}$$
factors through $[\ell^m]$.\par
As in the proof of Theorem 1, the isogenies $E_n \rightarrow E \rightarrow E \rightarrow E_{n_1}$ factor through $[\ell] \in \text{End}(E_n)$ and we get an $L$-isogeny $\psi_{n,+}:E_n \rightarrow E_{n_1}$. Repeat the process, we get a cycle $E_n\to E_{n_1} \to \ldots \to E_n$ in the $L$-isogeny graph. We want to prove that the cycle $E_n\to E_{n_1} \to \ldots \to E_n$ is simple, which means that every vertices in this cycle appears only once. Rewrite the isogenies as following:
\begin{equation}\label{e1}
   \xymatrix{&E_n \ar[r]^{\hat{\phi}_n} &E \ar[r]^{[a+ b\sqrt{-d}]} &E \ar[r]^{[\ell]} &E \ar[r]^{[a+ b\sqrt{-d}]} &\ldots \ar[r] &E \ar[r]^{[a+ b\sqrt{-d}]} &E \ar[r]^{\phi_{n}} &E_n}.
\end{equation} \par
First, we claim that different $G_n$'s generate non-isomorphic elliptic curves, so the isogeny $E_n \to E_{n_1} \to \ldots $ can return to $E_n$ if and only if there exist a positive integer $k$ such that $[a+b\sqrt{-d}]^k(G_n)=G_n$. \par
Let $I_{s}$ and $I_{t}$ be the kernel ideals of $\phi_{s}:E\to E_{s}$ and $\phi_{t}:E\to E_{t}$ with $s,t \in \{0,1,\ldots, \ell \}$. We recall that $E_{s}$ and $E_{t}$ are isomorphic if and only if there exists $\mu \in B_{p,\infty}^{*}$ such that $I_{s} \mu =I_{t}$ with $\text{Nrd}(\mu)=1$ and $\mu \neq \pm 1$. Moreover, $\ell \in I_{s}$, so $\ell \mu \in I_{t} \subseteq \mathcal{O}$. If $\ell$ does not split in $\mathbb{Z}[\tau]$, then there exists an element $\beta \in \mathcal{O}$ with $\text{Nrd}(\beta)=\ell^2$ but $\beta \notin \mathbb{Z}[\tau]$. Then $\mathbb{Z}[\beta]$ is an imaginary quadratic order which can be embedded in $\mathcal{O}$, and the absolute value $D''$ of the discriminant of $\mathbb{Z}[\beta]$ satisfies $D'' \leq 4\text{Nrd}(\beta)=4\ell^2$. By Theorem 2 in \cite{Kan}, we have $4p<4\ell^2 D$ since $\mathbb{Z}[\tau]$ and $\mathbb{Z}[\beta]$ are embedded in $\mathcal{O}$. If $p>\ell^2 L D >\ell^2 D$, then such $\mu$ does not exist and $E_{s}$ is not isomorphic to $E_{t}$. Moreover, we have proved that different $G_n$'s generate non-isomorphic elliptic curves.\par
Next, we prove $[s+t\sqrt{-d}](G_n)=G_n$ if and only if $\ell | t$ for $s, t \in \mathbb{Z}$, so $m$ is the smallest positive integer such that $[a+b\sqrt{-d}]^m(G_n)=G_n$. If $\ell \mid t$, then $[s+t\sqrt{-d}](G_n)=G_n$. On the contrary, if $[s+t\sqrt{-d}](G_n)=G_n$, then $[s+t\sqrt{-d}]P \in G_n$ for any $P\in G_n$. We have $[t\sqrt{-d}]P\in G_n$ for any $P\in G_n$, then $\ell \mid t$ and $[t\sqrt{-d}]P=\infty$. If not, we have $[\sqrt{-d}]P \in G_n$ and $[a+b\sqrt{-d}](G_n) \subseteq G_n$ which is a contradiction.\par
We have proved that the cycle $E_n\to E_{n_1} \to \ldots \to E_n$ is an $m$-cycle. Moreover, since $[a-b\sqrt{-d}]^m=x-y\sqrt{-d}$, there is another $m$-cycle at $j(E_n)$ in $\mathcal{G}_L(\overline{\mathbb{F}}_p,\ell+1)$.\par
If $\tau=\frac{1+\sqrt{-d}}{2}$, the proof is similar.\par
\end{proof}
\begin{remark}
Assume $\ell$ splits in $\mathbb{Z}[\tau]$, we can discuss whether $I_{s}$ and $I_{t}$ are in the same class as in \cite{MR4019136} if we know the endomorphism ring of $E$. In general, if $p>\ell^2 L D$ and $\ell$ splits in $\mathbb{Z}[\tau]$, we can construct two cycles with lengths $m$ at $j(E_n)$ in $\mathcal{G}_L(\overline{\mathbb{F}}_p,\ell+1)$ without backtracking but they may not be simple.
\end{remark}
The following example shows that the conditions in Theorem 2 are not necessary.
\begin{example}
Because $j_0, \ldots, j_5$ are different in Example 1, the conclusion of Theorem 2 also holds even if $p=3461<5^2 \times 28L$. Since the class number of $\mathbb{Z}[\sqrt{-7}]$ is one, by Deuring's reducing and lifting theorems, $\mathbb{Z}[5\sqrt{-7}]$ is optimally embedded in $\mathcal{O}_n \cong \text{End}(E(j_n))$ for $n=0,\ldots,5$. For $11=(2+\sqrt{-7})(2-\sqrt{-7})$, we compute $m=3$,  so there exist $3$-cycles at $j_n$ in $\mathcal{G}_{11}(\overline{\mathbb{F}}_p,\ell+1)$ by Theorem 2. For $L=23=4^2+7\cdot 1^2$, we compute $m=6$, so there exist $6$-cycles at $j_n$ in  $\mathcal{G}_{23}(\overline{\mathbb{F}}_p,\ell+1)$ by Theorem 2.\par
\end{example}
In the following of this section, we will deal with the special cases when $j(E)=1728$ or $0$. Let us recall a result in \cite{MR4019136}:\\
\begin{lemma}
  Suppose $\ell >3$.\\
(1) If $p \equiv  3 \ \text{mod} \ 4$ and $p > 4\ell^2$, there are $\frac{1}{2}(\ell - (\frac{ -1}{\ell}))$ vertices adjacent to $1728$ in $\mathcal{G}_{\ell}(\overline{\mathbb{F}}_p)$, each connecting $1728$ with 2 edges.\\
(2) If $p \equiv  2 \ \text{mod} \ 3$ and $p > 3\ell ^2$, there are $\frac{1}{3}(\ell - (\frac{\ell}{3}))$ vertices adjacent to $0$ in $\mathcal{G}_{\ell}(\overline{\mathbb{F}}_p)$, each connecting $0$ with 3 edges.
\end{lemma}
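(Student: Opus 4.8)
The plan is to exploit the exceptional structure of the supersingular curve $E$ with $j(E)=1728$ (resp.\ $0$). Recall $\text{Aut}(E)$ is cyclic of order $4$ (resp.\ $6$), generated by an automorphism $\theta$ with $\theta^2=-1$ (resp.\ $\omega$ with $\omega^3=-1$ and $\omega^2-\omega+1=0$), so $\text{End}(E)$ contains the maximal order $\mathbb{Z}[i]$ of $\mathbb{Q}(i)$ (resp.\ $\mathbb{Z}[\omega]$ of $\mathbb{Q}(\sqrt{-3})$). The $\ell+1$ edges issuing from $1728$ (resp.\ $0$) are the $\ell+1$ order-$\ell$ subgroups $G_0,\ldots,G_\ell$ of $E[\ell]$, the edge attached to $G_i$ terminating at the vertex $j(E/G_i)$; I must count the distinct values $\neq j(E)$ among the $j(E/G_i)$ and check that each is realized by exactly $2$ (resp.\ $3$) of the edges.

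First I would show that two edges land on the same vertex exactly when their kernels lie in one $\text{Aut}(E)$-orbit. Since $-1$ stabilizes every $G_i$, the group $\text{Aut}(E)$ acts on $\{G_0,\ldots,G_\ell\}$ through $\text{Aut}(E)/\{\pm1\}$, which is cyclic of order $2$ (resp.\ $3$). The claim is: if $j(E/G_i)\notin\{0,1728\}$ then $E/G_i\cong E/G_j$ iff $G_j\in\text{Aut}(E)\cdot G_i$. The nontrivial direction starts from an isomorphism $\lambda\colon E/G_i\to E/G_j$, forms $\psi:=\hat{\phi}_{G_j}\circ\lambda\circ\phi_{G_i}\in\text{End}(E)$ with $\text{Nrd}(\psi)=\ell^2$ and $G_i\subseteq\ker\psi$, so $\ker\psi$ is $E[\ell]$ or cyclic of order $\ell^2$. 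If $\ker\psi=E[\ell]$, then $\psi=\nu\circ[\ell]$ with $\nu\in\text{Aut}(E)$, and left-cancelling $\hat{\phi}_{G_j}$ (which has a dual) from $\hat{\phi}_{G_j}\lambda\phi_{G_i}=\hat{\phi}_{G_j}\phi_{G_j}\nu$ gives $\lambda\phi_{G_i}=\phi_{G_j}\nu$, i.e.\ $G_j=\nu(G_i)$. If $\ker\psi$ is cyclic (so $\psi\notin\mathbb{Z}$), I would derive a contradiction: by Theorem 2 in \cite{Kan} together with $p>4\ell^2$ (resp.\ $p>3\ell^2$), the order $\mathbb{Z}[\psi]$, of absolute discriminant $\le 4\ell^2$, and the order $\mathbb{Z}[i]$ (resp.\ $\mathbb{Z}[\omega]$) cannot both embed in the maximal order $\text{End}(E)$ unless they lie in a common imaginary quadratic field, and since $\mathbb{Z}[i]$ (resp.\ $\mathbb{Z}[\omega]$) is that field's maximal order this forces $\psi\in\mathbb{Z}[i]$ (resp.\ $\mathbb{Z}[\omega]$); a norm-$\ell^2$ element there with cyclic kernel forces $\ell$ to split, $\ell=\pi\bar\pi$, with $\psi$ associate to $\pi^2$, whence $G_i=\ker\pi$ is an eigenline of $\theta$ (resp.\ $\omega$); but then, $\theta$ (resp.\ $\omega$) stabilizing $G_i$, Lemma 1 descends it to an automorphism of $E/G_i$ of order $4$ (resp.\ $6$), contradicting $j(E/G_i)\notin\{0,1728\}$.

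Next I would identify the subgroups producing a loop and count the orbits. A subgroup with $j(E/G_i)=1728$ (resp.\ $0$) makes $\phi_{G_i}$ into a norm-$\ell$ endomorphism of $E$ after composing with an isomorphism of the target; the same Kaneko argument puts it in $\mathbb{Z}[i]$ (resp.\ $\mathbb{Z}[\omega]$), so these $G_i$ are precisely the eigenlines of $\theta$ (resp.\ $\omega$) on $E[\ell]$, of which there are $1+(\frac{-1}{\ell})$ (resp.\ $1+(\frac{\ell}{3})$): two when the characteristic polynomial $x^2+1$ (resp.\ $x^2-x+1$) of $\theta$ (resp.\ $\omega$), which is separable mod $\ell$ because $\ell>3$, splits mod $\ell$, and none otherwise, using the reciprocity $(\frac{-3}{\ell})=(\frac{\ell}{3})$. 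Conversely each eigenline gives $j(E/G_i)=1728$ (resp.\ $0$) by descending the automorphism through $\phi_{G_i}$ via Lemma 1, and the same bound excludes $j(E/G_i)=0$ in the first case and $j(E/G_i)=1728$ in the second. The image of $\theta$ (resp.\ $\omega$) in $\mathrm{PGL}_2(\mathbb{F}_\ell)$ has prime order $2$ (resp.\ $3$), hence acts freely on the non-fixed lines, so the remaining $\ell-(\frac{-1}{\ell})$ (resp.\ $\ell-(\frac{\ell}{3})$) subgroups break into $\frac{1}{2}(\ell-(\frac{-1}{\ell}))$ orbits of size $2$ (resp.\ $\frac{1}{3}(\ell-(\frac{\ell}{3}))$ orbits of size $3$); the divisibility needed for these to be integers follows from $\ell\equiv(\frac{-1}{\ell})\bmod 4$ (resp.\ $\ell\equiv(\frac{\ell}{3})\bmod 3$). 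By the first step these orbits correspond bijectively to the vertices $\neq j(E)$ adjacent to $j(E)$, each joined to $j(E)$ by a number of edges equal to its orbit size, which is the stated count.

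\textbf{The main obstacle} is the nontrivial direction of the first step: upgrading an abstract isomorphism between quotient curves to an automorphism of $E$, and in particular eliminating the cyclic-kernel case. This is exactly where the hypotheses $p>4\ell^2$ and $p>3\ell^2$ are consumed, through Kaneko's bound on $p$ in terms of the product of the discriminants of two distinct imaginary quadratic orders sharing a maximal order. Everything else — the reciprocity rewrites, the $\mathrm{PGL}_2$-orbit bookkeeping together with the integrality checks, and the repeated use of Lemma 1 to push a stabilized automorphism through $\phi_{G_i}$ — is routine.
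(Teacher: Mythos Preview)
The paper does not prove this lemma at all: it is introduced with ``Let us recall a result in \cite{MR4019136}'' and simply quoted from Li--Ouyang--Xu. So there is no in-paper proof to compare against.

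That said, your argument is correct and is exactly the kind of proof one expects (and is in the spirit of how the present paper uses Kaneko's bound elsewhere, e.g.\ in the proofs of Theorems~2 and~5). The key step---ruling out the cyclic-kernel case for $\psi=\hat\phi_{G_j}\lambda\phi_{G_i}$ via Theorem~2 of \cite{Kan}, which is precisely where the hypotheses $p>4\ell^2$ and $p>3\ell^2$ enter---is handled cleanly, and the residual case $\psi\in\mathbb{Z}[i]$ (resp.\ $\mathbb{Z}[\omega]$) with cyclic kernel is correctly reduced to $G_i$ being an eigenline and hence $j(E/G_i)\in\{0,1728\}$. One small point you leave implicit: to exclude $j(E/G_i)=0$ in part~(1) (and $j(E/G_i)=1728$ in part~(2)) you should note that such an isogeny would yield, by pulling back a primitive cube (resp.\ fourth) root of unity from $\operatorname{Aut}(E/G_i)$, an element of $\operatorname{End}(E)$ of reduced norm $\ell^2$ and trace $\pm\ell$ (resp.\ $0$), hence discriminant $-3\ell^2$ (resp.\ $-4\ell^2$), lying in $\mathbb{Q}(\sqrt{-3})$ (resp.\ $\mathbb{Q}(i)$); Kaneko's bound with $D=4$ (resp.\ $3$) then gives $p\le 3\ell^2<4\ell^2$ (resp.\ $p\le 3\ell^2$), the desired contradiction. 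With that made explicit, the proof is complete.
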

For $j(E)=1728$, first, we suppose $\ell>3$. If $p \equiv  \ 3 \ \text{mod} \ 4$ and $p > 4\ell^2$, by Lemma 2, we can label the vertices adjacent to $1728$ in $\mathcal{G}_{\ell}(\overline{\mathbb{F}}_p)$ with $j_n$ for $n=1, \ldots, \frac{1}{2}(\ell - (\frac{ -1}{\ell}))$ and denote $E_n=E(j_n)$. We know $\mathbb{Z}[i]$ is optimally embedded in $\text{End}(E(1728))$.
\begin{theorem}
Let $\ell$ be an odd prime and $j_1,\ldots, j_{\frac{1}{2}(\ell - (\frac{ -1}{\ell}))}$ be the vertices adjacent to $1728$ in $\mathcal{G}_{\ell}(\overline{\mathbb{F}}_p)$. If $p \equiv  \ 3 \ \text{mod} \ 4$ and $p > 4\ell^2 L$, then there exist two $m$-cycles at every $j_n$ in $\mathcal{G}_{L}(\overline{\mathbb{F}}_p)$ for $L \equiv 1\ \text{mod} \ 4$.
\end{theorem}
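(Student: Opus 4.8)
The plan is to rerun the argument of Theorem 2 with the CM order $\mathbb{Z}[\tau]=\mathbb{Z}[i]$, whose absolute discriminant is $D=4$, so that the hypothesis $p>\ell^2 L D$ of Theorem 2 reads exactly $p>4\ell^2 L$. The one structural novelty is that $\text{Aut}(E(1728))=\langle\theta\rangle$ has order $4$ with $\theta$ acting on order-$\ell$ subgroups the way $[i]$ does; this is precisely why, by Lemma 2, each $j_n$ is joined to $1728$ by two edges, i.e. the subgroups $G$ and $[i]G$ of $E(1728)[\ell]$ always have isomorphic quotients $E(1728)/G\cong E(1728)/[i]G$, and (since there are exactly two such edges) these are the only two subgroups with that quotient. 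So the whole proof mirrors that of Theorem 2, with the role of a single kernel $G_n$ played by the $[i]$-orbit $\{G_n,[i]G_n\}$.

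I would first clear the arithmetic. As $L\equiv 1\bmod 4$, $L$ splits in the PID $\mathbb{Z}[i]$, say $L=(a+bi)(a-bi)$ with both factors principal, so Theorem 1 already produces loops or cycles at each $j_n$. Since $p>4\ell^2>3\ell^2$, no neighbour of $1728$ can be the curve with $j=0$: otherwise $\mathbb{Z}[\ell i]$ (discriminant $-4\ell^2$) and $\mathbb{Z}[\frac{1+\sqrt{-3}}{2}]$ would both embed into its endomorphism ring, contradicting Theorem 2 of \cite{Kan}. Together with Lemma 2 this gives $j(E_n)\neq 0,1728$, so $\text{End}(E_n)$ has unit group $\{\pm 1\}$, and, the only order of $\mathbb{Q}(i)$ strictly containing $\mathbb{Z}[\ell i]$ being $\mathbb{Z}[i]$ (which would force $j(E_n)=1728$), the order $\mathbb{Z}[\ell i]$ is optimally embedded in $\text{End}(E_n)$.

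Next I would pin down the length. Viewing $E(1728)[\ell]$ as a $\mathbb{Z}[i]/\ell$-module, an order-$\ell$ subgroup that is not one of the (at most two) $\mathbb{Z}[i]$-stable lines is fixed by $[s+ti]$ exactly when $\ell\mid t$; the stable lines produce loops at $1728$ itself rather than neighbours, so every relevant $G_n$ satisfies $[s+ti]G_n=G_n\iff\ell\mid t$. Writing $(a+bi)^k=x_k+y_k i$, the walk furnished by Theorem 1, namely $E_n=E(1728)/G_n\to E(1728)/[a+bi]G_n\to E(1728)/[a+bi]^2G_n\to\cdots$ with consecutive terms joined by $L$-isogenies, returns to the isomorphism class of $E_n$ at step $k$ iff $[a+bi]^kG_n\in\{G_n,[i]G_n\}$, i.e. iff $\ell\mid y_k$ or $\ell\mid x_k$; the least such $k$ is precisely the order $m$ of the prime ideal above $L$ in the class group of $\mathbb{Z}[\ell i]$ (the ``or $\ell\mid x_k$'' clause, absent in Theorem 2, coming from the unit $i$ of $\mathbb{Z}[i]$). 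If $\ell\mid ab$ then $m=1$, the ideals $I_{n,1},I_{n,2}$ are principal, and we get two loops at $j_n$ (cf. the Remark after Theorem 1). If $\ell\nmid ab$, the same fixed-subgroup computation applied along the walk shows $E(1728)/[a+bi]^{j}G_n\cong E(1728)/[a+bi]^{j'}G_n$ only when $m\mid(j'-j)$, so $E_n,E(1728)/[a+bi]G_n,\dots,E(1728)/[a+bi]^{m-1}G_n$ are pairwise non-isomorphic; as in Theorem 2 the walk does not backtrack, so it is a simple $m$-cycle at $j_n$ in $\mathcal{G}_L(\overline{\mathbb{F}}_p)$, and the conjugate factor $a-bi$ gives the second one.

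The step that needs the most care is the claim, used above, that $E(1728)/G\cong E(1728)/G'$ for order-$\ell$ non-$\mathbb{Z}[i]$-stable subgroups forces $G'\in\{G,[i]G\}$: a priori two subgroups in distinct $\langle[i]\rangle$-orbits could have isomorphic quotients via an isomorphism not induced from $E(1728)$, which would destroy simplicity. This is where Kaneko's bound enters, exactly as in Theorem 2: such an isomorphism would make $I_{n,1}=(L,\ell(a+bi))$ a principal left ideal of $\text{End}(E_n)$, hence produce $\alpha\in\text{End}(E_n)$ with $\text{Nrd}(\alpha)=L$ and $\alpha\notin\mathbb{Z}[\ell i]$ (here $\ell\nmid ab$ and the essential uniqueness of $L=a^2+b^2$ are used); then $\mathbb{Z}[\alpha]$ has absolute discriminant $\le 4L$, and Theorem 2 of \cite{Kan} applied to the two embeddings $\mathbb{Z}[\ell i],\mathbb{Z}[\alpha]\hookrightarrow\text{End}(E_n)$ yields $4p<4\ell^2\cdot 4L=16\ell^2L$, contradicting $p>4\ell^2L$. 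Putting these pieces together gives two $m$-cycles at every $j_n$.
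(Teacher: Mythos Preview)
Your proposal follows essentially the same route as the paper: specialize the machinery of Theorem~2 to $\tau=i$, $D=4$, and account for the extra units $\pm i\in\mathbb{Z}[i]^{*}$ by letting the walk return to $E_n$ whenever $(a+bi)^k G_n\in\{G_n,[i]G_n\}$, i.e.\ when $\ell\mid x_k$ or $\ell\mid y_k$. This is exactly the paper's description of $m$, and your identification of the $[i]$-orbit $\{G_n,[i]G_n\}$ with the two $\ell$-isogenies $E(1728)\to E_n$ is the same observation the paper records as $G_n'=[i]G_n$.

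Two small points. First, Lemma~2 is stated only for $\ell>3$, so the paper treats $\ell=3$ separately via the explicit factorization $\Phi_3(X,1728)=(X^2-153542016X-1790957481984)^2$, obtaining two neighbours $j_1,j_2$ and then checking directly that $m\in\{1,2\}$ (since $3\mid(a^2-b^2)$ whenever $3\nmid ab$). Your write-up, which leans on Lemma~2 throughout, does not cover $\ell=3$ as stated.

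Second, your final paragraph conflates the two separate applications of Kaneko's bound in the proof of Theorem~2. The claim that order-$\ell$ subgroups in distinct $\langle[i]\rangle$-orbits give non-isomorphic quotients is a statement about left ideals of $\text{End}(E(1728))$ of reduced norm $\ell$; this is precisely what Lemma~2 already gives you (and what you invoked earlier), and it does \emph{not} force $I_{n,1}$ to be principal. The argument you actually sketch there---producing $\alpha\in\text{End}(E_n)$ with $\text{Nrd}(\alpha)=L$, $\alpha\notin\mathbb{Z}[\ell i]$, and invoking $p>4\ell^2L$---is the other Kaneko step, the one showing $E_n\not\cong E_{n_1}$ (i.e.\ $m\neq 1$ when $\ell\nmid ab$). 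Both steps are needed and both are valid under your hypotheses; they just belong to different places in the argument. Note also that Theorem~2 carries the hypothesis ``$\ell$ does not split in $\mathbb{Z}[\tau]$'' precisely for the first of these steps; in the $\tau=i$ case that hypothesis is dropped because Lemma~2 (valid for all $\ell>3$) supplies the needed neighbour structure directly.
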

\begin{proof}
Suppose $\ell>3$. If $p \equiv  \ 3 \ \text{mod} \ 4$ and $p > 4\ell^2 L$, then $\mathbb{Z}[\ell i]$ is optimally embedded in $\text{End}(E(j_n))$ and $I_{n,1}$ and $I_{n,2}$ are not principal for every $n\in \{1, \ldots,\frac{1}{2}(\ell - (\frac{ -1}{\ell})) \}$. If $L=(a+bi)(a-bi)$, then $m$ is the smallest positive integer such that $(a+bi)^m=x+yi$ with $\ell \mid y$ or $\ell \mid x$. Let $G'_n = [i](G_n)$, then $G'_n$ is the kernel of $\hat{\phi}_n \circ [i]:E(1728) \to E(1728) \to E(j_n)$. $\hat{\phi}_n$ and $\hat{\phi}_n \circ [i]$ are the two $\ell$-isogenies between $E(1728)$ and $E(j_n)$. For $s,t \in \mathbb{Z}$, we have that $[s+ti](G_n)=G_n$ (resp. $G'_n$) if and only if $\ell \mid t$ (resp. $\ell \mid s$). As in the proof of Theorem 2, there exist two $m$-cycles at $j_n$ in the supersingular isogeny graphs $\mathcal{G}_{L}(\overline{\mathbb{F}}_p)$.\par
For $\ell =3$, we have $\Phi_3(X, 1728)=(X^2-153542016X-1790957481984)^2$. If $p \equiv  \ 3 \ \text{mod} \ 4$ and $p> 31$, then there are two vertices $j_1$ and $j_2$ adjacent to $1728$ in $\mathcal{G}_{\ell}(\overline{\mathbb{F}}_p)$, each connecting $1728$ with 2 edges. For $L=a^2+b^2$, if $3\mid a$ or $3\mid b$, there are two loops at $j_1$ and $j_2$ in $\mathcal{G}_{L}(\overline{\mathbb{F}}_p)$. If $3\nmid ab$, we have $3\mid(a^2-b^2)$. There are two $2$-cycles at $j_1$ and $j_2$ in $\mathcal{G}_{L}(\overline{\mathbb{F}}_p)$.
\end{proof}
\begin{remark}
  For $\ell =2$, we have $\Phi_2(X,1728)=(X-1728)(X-66^3)^2$. If $p \equiv  \ 3 \ \text{mod} \ 4$ and $p> 11$, then $66^3$ is a supersingular $j$-invariant which is different from $1728$. $\mathbb{Z}[2i]$ is optimally embedded in $\mathcal{O}(66^3)$. For $L=a^2+b^2$, there are at least two loops at $66^3$ in $\mathcal{G}_{L}(\overline{\mathbb{F}}_p)$.
\end{remark}
For $j(E)=0$ and $\ell>3$, if $p \equiv \ 2 \ \text{mod} \ 3$ and $p > 3\ell ^2$, by Lemma 2, we can label the vertices adjacent to $0$ in $\mathcal{G}_{\ell}(\overline{\mathbb{F}}_p)$ with $j_n$ for $n=1, \ldots, \frac{1}{3}(\ell - (\frac{\ell}{3}))$. Let $\epsilon = \frac{1+\sqrt{-3}}{2}$, we have $\mathbb{Z}[\epsilon]$ is optimally embedded in $\text{End}(E(0))$.
\begin{theorem}
Let $\ell$ be an odd prime and $j_1,\ldots, j_{\frac{1}{3}(\ell - (\frac{\ell}{3}))}$ be the vertices adjacent to $0$ in $\mathcal{G}_{\ell}(\overline{\mathbb{F}}_p)$. If $p \equiv \ 2 \ \text{mod} \ 3$ and $p > 3\ell ^2L$, then there exist two $m$-cycles at every $j_n$ in $\mathcal{G}_{L}(\overline{\mathbb{F}}_p)$ for $L\equiv 1 \ \text{mod} \ 3$.
\end{theorem}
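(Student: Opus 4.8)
The plan is to run the proof of Theorem 3 again, now with the Eisenstein order $\mathbb{Z}[\epsilon]$ and the cube root of unity $\zeta:=\epsilon-1\in\text{Aut}(E(0))$ (so $\zeta^2+\zeta+1=0$ and $\zeta^2=-\epsilon$) in place of $\mathbb{Z}[i]$ and $[i]$, and with the ``three edges at $0$'' of Lemma 2(2) in place of the ``two edges at $1728$'' of Lemma 2(1). As in the $\ell=3$ case of Theorem 3 I would dispose of $\ell=3$ separately: $\Phi_3(X,0)$ has the form $c\,X\,(X-j_1)^3$, so $0$ has a single non-trivial neighbour $j_1$ joined to it by three edges, and for $p\equiv 2\bmod 3$, $p>27L$ one checks directly that $\mathbb{Z}[3\epsilon]$ is optimally embedded in $\text{End}(E(j_1))$ and that $L=\pi\bar\pi$ with $L\equiv 1\bmod 3$ yields two loops or two $m$-cycles there. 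For the remainder assume $\ell>3$; then by Lemma 2(2) the vertices $j_n$ adjacent to $0$ are pairwise distinct, and for each $n$ the three $\ell$-isogenies $E(0)\to E(j_n)$ have kernels $G_n$, $[\zeta]G_n$, $[\zeta^2]G_n$.

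The second step records the arithmetic input, exactly as in the proof of Theorem 2 (and as invoked in Theorem 3). Since $L\equiv 1\bmod 3$ and $\mathbb{Z}[\epsilon]$ has class number one, write $L=\pi\bar\pi$ with $\pi=a+b\epsilon\in\mathbb{Z}[\epsilon]$, so $L=a^2+ab+b^2$. If $\ell$ divides one of $a$, $b$, $a+b$, then a unit multiple of $\pi$ lies in $\mathbb{Z}[\ell\epsilon]\subseteq\mathcal{O}_n$, the kernel ideals $I_{n,1}=(L,\ell\pi)$, $I_{n,2}=(L,\ell\bar\pi)$ of the two $L$-isogenies $\psi_{n,\pm}$ of Theorem 1 are principal, $m=1$, and we get two loops at $j_n$. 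Otherwise a principal $I_{n,1}$ would furnish $\alpha\in\mathcal{O}_n$ with $\text{Nrd}(\alpha)=L$, and since $\ell$ divides none of $a$, $b$, $a+b$ the uniqueness of the representation of the prime $L$ by $X^2+XY+Y^2$ forces $\alpha\notin\mathbb{Z}[\ell\epsilon]$; then $\mathbb{Z}[\ell\epsilon]$ and $\mathbb{Z}[\alpha]$ are distinct imaginary quadratic orders in $\mathcal{O}_n$ with discriminants of absolute value at most $3\ell^2$ and $4L$, so Theorem 2 of \cite{Kan} gives $p<3\ell^2L$, contradicting the hypothesis; the same comparison shows $\mathbb{Z}[\ell\epsilon]$ is optimally embedded (one may also see this from $j_n\neq 0$). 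Recall $m$ is the order of a prime $\mathfrak{L}\mid L$ in the class group of $\mathbb{Z}[\ell\epsilon]$; unwinding, $m$ is the least positive integer for which $\pi^m=x+y\epsilon$ has $\ell$ dividing one of $x$, $y$, $x+y$, the three alternatives appearing because $\mathbb{Z}[\epsilon]$ has six units and the $\epsilon$-coordinates of $(x+y\epsilon)$, $\zeta(x+y\epsilon)$, $\zeta^2(x+y\epsilon)$ are $y$, $x$, $x+y$ up to sign.

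The heart of the proof is the length count. As $\gcd(L,\ell)=1$, the degree-$L$ endomorphism $[\pi]$ is a bijection of $E(0)[\ell]$ and permutes its $\ell+1$ order-$\ell$ subgroups; since $\pi$ commutes with $\mathbb{Z}[\epsilon]$ it stabilizes the (at most two) subgroups on which $\epsilon$ acts as a scalar --- these are the kernels of the horizontal $\ell$-isogenies $E(0)\to E(0)$ and are not among the $G_n$ --- so the orbit $G_n,[\pi]G_n,[\pi^2]G_n,\dots$ stays among the $G_n$ and, by the construction of Theorem 1, is carried to the walk $E(j_n)\to E(j_{n_1})\to E(j_{n_2})\to\cdots$ obtained by composing the $\psi_{\cdot,+}$, i.e.\ by the ideal chain $\mathfrak{L}\mathcal{O}_n,\mathfrak{L}^2\mathcal{O}_n,\dots$. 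The elementary lemma I would prove is that, for $s,t\in\mathbb{Z}$,
$$[s+t\epsilon]G_n\in\{G_n,\,[\zeta]G_n,\,[\zeta^2]G_n\}\iff \ell\mid s \text{ or } \ell\mid t \text{ or } \ell\mid(s+t),$$
the right-hand side being independent of $n$; this is verified by letting $\epsilon$ act on $E(0)[\ell]\cong(\mathbb{Z}/\ell)^2$ as a root of $X^2-X+1$ and using $\zeta=\epsilon-1$, $\zeta^2=-\epsilon$, treating the split case $\ell\equiv 1\bmod 3$ and the inert case $\ell\equiv 2\bmod 3$ separately and checking in the split case that $[\zeta]G_n\neq G_n$. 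Granting the lemma, $[\pi^k]G_n\in\{G_n,[\zeta]G_n,[\zeta^2]G_n\}$ exactly when $m\mid k$; since the $j_n$ are pairwise distinct, the walk first returns to $j_n$ at step $m$ and its $m$ vertices are distinct, so it is a simple $m$-cycle. Using $\psi_{\cdot,-}$ (equivalently $\bar\pi$) gives the second $m$-cycle, finishing the case $\ell>3$.

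I expect the main obstacle to be the bookkeeping in that elementary lemma. Unlike in Theorem 3, where $[i]$ merely interchanges the two kernels at $1728$, here $[\epsilon]$ genuinely permutes the three kernels $G_n,[\zeta]G_n,[\zeta^2]G_n$, so one must track which of the three the subgroup $[s+t\epsilon]G_n$ lands in and verify that the resulting divisibility conditions assemble into the single displayed congruence, and one must confirm in the split case that $[\zeta]G_n\neq G_n$ so that Lemma 2(2)'s count of three edges is consistent with the first return occurring at step $m$ rather than at $3m$. The separate treatment of $\ell=3$ is a minor but genuinely distinct point, since Lemma 2(2) is stated only for $\ell>3$ and $3$ ramifies in $\mathbb{Z}[\epsilon]$.
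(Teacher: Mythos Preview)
Your proposal is correct and follows exactly the route the paper intends: the paper's own proof consists only of the line ``For $\ell>3$, the proof is similar to that of Theorem~3'' together with a brief $\ell=3$ paragraph, and you have faithfully unpacked what ``similar to Theorem~3'' means here, correctly replacing $\mathbb{Z}[i]$ by $\mathbb{Z}[\epsilon]$, the involution $[i]$ by the order-three action of $\zeta=\epsilon-1$ on the triple $\{G_n,[\zeta]G_n,[\zeta^2]G_n\}$, and the condition ``$\ell\mid x$ or $\ell\mid y$'' by ``$\ell\mid x$ or $\ell\mid y$ or $\ell\mid(x+y)$'' coming from the six units of $\mathbb{Z}[\epsilon]$. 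One small simplification you can make in the $\ell=3$ case: since $3$ ramifies in $\mathbb{Z}[\epsilon]$, the class-number formula gives $h(\mathbb{Z}[3\epsilon])=1$, so $m=1$ automatically and one always gets two loops at $j_1$ (the paper records this directly rather than leaving ``or two $m$-cycles'' as an alternative).
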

\begin{proof}
For $\ell >3$, the proof is similar to that of Theorem 3.\par
For $\ell =3$, we have $\Phi_3(X, 0)=X(X-12288000)^3$. If $p \equiv 2 \ \text{mod} \ 3$ and $p> 23$, we have $-12288000$ is a supersingular $j$-invariant which is different from $0$. $\mathbb{Z}[3\epsilon]$ is optimally embedded in $\mathcal{O}(-12288000)$. For $L \equiv \ 1 \ \text{mod} \ 3$, there are at least two loops at $-12288000$ in $\mathcal{G}_{L}(\overline{\mathbb{F}}_p)$.\par
\end{proof}
\begin{remark}
For $\ell =2$, we have $\Phi_2(X,0)=(X-54000)^3$. If $p \equiv 2 \ \text{mod} \ 3$ and $p> 11$, then $54000$ is a supersingular $j$-invariant which is different from $0$. It is easy to show that $\mathbb{Z}[\sqrt{-3}]$ is optimally embedded in $\mathcal{O}(54000)$. For $L \equiv \ 1 \ \text{mod} \ 3$, there are at least two loops at $54000$ in $\mathcal{G}_{L}(\overline{\mathbb{F}}_p)$.
\end{remark}
As we can see, $m$ plays an important role in our theorems. Denote $O=\mathbb{Z}[\tau]$ and $O'=\mathbb{Z}[\ell \tau]$. Let $h$ and $h'$ be the class number of $O$ and $O'$ respectively. We have the following formula in Chapter 7 of \cite{MR3236783}
$$\frac{h'}{h}=\frac{\ell}{[O^*:O'^*]}\bigg(1-\bigg(\frac{O}{\ell}\bigg)\frac{1}{\ell}\bigg),$$
where $O^*$ and $O'^*$ are the unit groups of $O$ and $O'$ respectively.\par
It is easy to see that $m \mid \frac{h'}{h}$. The following example shows that $m$ can take any possible value.
\begin{example}
  Let us return to Example 1. For $\tau=\sqrt{-7}$ and $\ell =5$, we know that $5$ is inert in $\mathbb{Z}[\sqrt{-7}]$ and $\frac{h'}{h}=6$. We have $m=3$ or $6$ if $L=11$ or $23$. Furthermore, when $L=179$ or $53$, we have $m=1$ or $2$ respectively.\par

\end{example}

\section{$2$-Cycles}
As in Section 4, suppose that $\mathbb{Z}[\tau]$ is optimally embedded in $\mathcal{O}\cong\text{End}(E)$. If $E_n$ and $E$ are $\ell$-isogenous, we can get the sufficient conditions under which there are $2$-cycles at $j(E_n)$ in $\mathcal{G}_L(\overline{\mathbb{F}}_p,\ell+1)$.
\begin{corollary}
Suppose that $\mathbb{Z}[\tau]$ and $\mathbb{Z}[\ell \tau]$ are optimally embedded in $\text{End}(E)$ and $\text{End}(E_n)$ respectively and $p>\ell^2 L D$ where $D$ is the absolute value of the discriminant of $\mathbb{Z}[\tau]$.\par
(1)Suppose $p \equiv  3 \ \text{mod} \ 4$. If $\tau=i$ and $\ell >2$, then there exist $2$-cycles at $j(E_n)$ in $\mathcal{G}_L(\overline{\mathbb{F}}_p,\ell+1)$ if $L=a^2+b^2$ with $\ell \mid (a^2-b^2)$ and $\ell \nmid a$.\par
(2)Suppose $p \equiv  2 \ \text{mod} \ 3$. If $\tau=\epsilon$ and $\ell >3$, then there exist $2$-cycles at $j(E_n)$ in $\mathcal{G}_L(\overline{\mathbb{F}}_p,\ell+1)$ if $L=a^2+3b^2$ with $\ell \nmid a$, $\ell \nmid b$, $\ell \nmid (a+b)$ and $\ell \mid (a^2-b^2)$ (or $\ell \mid (b^2+2ab)$, or $\ell \mid (a^2+2ab)$).\par
(3)If $\mathbb{\tau}\neq \mathbb{Z}[i]$, $\mathbb{Z}[\epsilon]$ and $\ell>2$, then there exist $2$-cycles at $j(E_n)$ in $\mathcal{G}_L(\overline{\mathbb{F}}_p,\ell+1)$ if $L=(a+b\tau)(a-b\tau)$ with $\ell \mid a$ and $\ell \nmid b$.\par
(4)If $\tau =\sqrt{-d}\neq i$ and $\ell=2$, then there exist $2$-cycles at $j(E_n)$ in $\mathcal{G}_L(\overline{\mathbb{F}}_p,\ell+1)$ if $L=(a+b\sqrt{-d})(a-b\sqrt{-d})$ with $2 \nmid b$.\par
\end{corollary}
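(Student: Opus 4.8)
The plan is to deduce all four parts from Theorems 2, 3 and 4 together with the remark following Theorem 2, the only real content being to determine, in each case, the value of $m$. Recall from those results that, under the standing hypothesis $p>\ell^2 L D$, one obtains at $j(E_n)$ two cycles in $\mathcal{G}_L(\overline{\mathbb{F}}_p,\ell+1)$ whose common length $m$ is the least positive integer for which $(a+b\tau)^m$ carries the $\ell$-subgroup $G_n$ back onto one of the ``distinguished'' subgroups attached to the edge $j(E)\to j(E_n)$; a loop corresponds to $m=1$ and a $2$-cycle to $m=2$. So for each part I would (i) describe the distinguished subgroups and record, from the proofs of Theorems 2--4, the congruence on $(s,t)$ equivalent to ``$[s+t\tau](G_n)$ is distinguished''; (ii) use it to translate ``$(a+b\tau)^2$ returns $G_n$ to a distinguished subgroup'' and ``$a+b\tau$ does not'' into the stated divisibility conditions; and (iii) check that $m=2$ really gives a \emph{simple} $2$-cycle, which is exactly the argument already used in the proofs of Theorems 2 and 3 (the bound $p>\ell^2 L D$ together with $\ell\nmid b$ forcing, via \cite{Kan}, that $E_{n_1}\not\cong E_n$).

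In parts (3) and (4) the order $\mathbb{Z}[\tau]$ has unit group $\{\pm1\}$, so the only distinguished subgroup is $G_n$ itself and $[s+t\sqrt{-d}](G_n)=G_n\iff\ell\mid t$, exactly as in the proof of Theorem 2 (this step uses only that $\mathbb{Z}[\ell\tau]$ is optimally embedded in $\mathcal{O}_n$, so $j(E)$ need not avoid $0,1728$ here). Squaring gives $(a+b\sqrt{-d})^2=(a^2-db^2)+2ab\sqrt{-d}$. For $\ell=2$ the coefficient $2ab$ is automatically divisible by $\ell$, so $m\le 2$, and $m\ne 1\iff 2\nmid b$: this is part (4), where the hypothesis $d\ne1$ is exactly what keeps the unit group equal to $\{\pm1\}$ and excludes the degenerate pair $\tau=i,\ \ell=2$ (which gives loops, not $2$-cycles). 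For $\ell>2$ one has $\ell\mid 2ab\iff\ell\mid ab$, so $m\le 2$ and $m\ne 1$ together force $\ell\mid a$ and $\ell\nmid b$: this is part (3). The case $\tau=\tfrac{1+\sqrt{-d}}{2}$ is handled the same way after writing $L=(a+b\tau)(a-b\tau)$, the only subtlety being the half-integral coefficients when reducing modulo $\ell$. One must also confirm Theorem 2 applies: $\ell=2$ never splits in $\mathbb{Z}[\sqrt{-d}]$, so part (4) is fine; in part (3), if $\ell$ splits in $\mathbb{Z}[\tau]$ one appeals instead to the remark after Theorem 2, and the Kaneko estimate again makes the resulting length-$2$ closed walk non-backtracking and joining two distinct vertices.

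In parts (1) and (2), $\tau=i$ (resp.\ $\tau=\epsilon$) has reduced norm $1$, hence is an automorphism of $E$, so $j(E)=1728$ (resp.\ $0$); by Lemma 2 the edge $j(E)\to j(E_n)$ then has multiplicity $2$ (resp.\ $3$), and the distinguished subgroups are $G_n$ and $[i](G_n)$ (resp.\ $G_n,\,[\zeta](G_n),\,[\zeta^2](G_n)$, with $\zeta\in\mathbb{Z}[\epsilon]$ a primitive cube root of unity), which is the situation treated in the proof of Theorem 3. For part (1), $[s+ti](G_n)=G_n\iff\ell\mid t$ and $[s+ti](G_n)=[i](G_n)\iff\ell\mid s$; since $(a+bi)^2=(a^2-b^2)+2abi$, the condition $m\le 2$ reads ``$\ell\mid 2ab$ or $\ell\mid(a^2-b^2)$'', and selecting the branch $\ell\mid(a^2-b^2)$ together with $\ell\nmid a$ (which then forces $\ell\nmid b$, so $m\ne1$) gives precisely the stated hypotheses. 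For part (2) I would first work out, for $k\in\{0,1,2\}$, the congruence on $(s,t)$ characterizing $[s+t\sqrt{-3}](G_n)=[\zeta^k](G_n)$ via $\sqrt{-3}=2\epsilon-1$; then ``$(a+b\sqrt{-3})^2$ lies in one of the three distinguished subgroups'' should unpack into the three alternatives $\ell\mid(a^2-b^2)$, $\ell\mid(b^2+2ab)$, $\ell\mid(a^2+2ab)$, and ``$a+b\sqrt{-3}$ lies in none of them'' into $\ell\nmid a$, $\ell\nmid b$, $\ell\nmid(a+b)$.

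The step I expect to be the main obstacle is this last computation in part (2): identifying the three distinguished subgroups as one orbit of $G_n$ under the order-$3$ action of $\text{Aut}(E(0))$ on $\ell$-subgroups, and then pinning down which residue classes of $s$ and $t$ send $G_n$ to each of them, since the change of basis between $\mathbb{Z}[\sqrt{-3}]$ and $\mathbb{Z}[\epsilon]$ introduces $2$'s in denominators that must be cleared modulo $\ell$. Once those three congruences are in hand, the divisibility manipulations in all of (1)--(4) are routine, and the proof that $m=2$ yields a simple $2$-cycle goes exactly as in the proofs of Theorems 2 and 3.
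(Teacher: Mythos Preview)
Your approach is precisely what the paper intends: Corollary 5.1 is stated without any proof, being simply the specialisation of Theorems 2--4 (together with Remark 4.1 for the split case) to $m=2$, and your plan of computing $(a+b\tau)$ and $(a+b\tau)^2$ modulo the unit group of $\mathbb{Z}[\tau]$ and reading off the divisibility conditions is exactly that specialisation. Your handling of parts (1), (3), (4) and of the simplicity of the $2$-cycle via the Kaneko bound matches the arguments already given in the proofs of Theorems 2 and 3.

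Your caution about part (2) is well placed, and in fact the computation you outline does not reproduce the printed conditions. Writing $a+b\sqrt{-3}=(a-b)+2b\epsilon$ and checking when some unit multiple lies in $\mathbb{Z}[\ell\epsilon]$, one finds $m=1$ iff $\ell$ divides one of $b$, $a-b$, $a+b$ (not $a$, $b$, $a+b$), and the three $m\le 2$ alternatives become $\ell\mid ab$, $\ell\mid(a^2-2ab-3b^2)$, $\ell\mid(a^2+2ab-3b^2)$. For instance $L=109=1^2+3\cdot 6^2$ with $\ell=5$ satisfies all the hypotheses of (2) as printed, yet $\epsilon^{-1}(1+6\sqrt{-3})=7+5\epsilon\in\mathbb{Z}[5\epsilon]$ shows $m=1$. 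This is a slip in the stated corollary rather than a flaw in your method; your programme, carried out carefully, yields the corrected conditions.
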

For $m=1$, there are loops at $j(E_n)$ in $\mathcal{G}_L(\overline{\mathbb{F}}_p)$. The method in \cite{MR3872948} can be used to determine the upper bounds on $p$ for which $j(E_n)$ has unexpected loops in $\mathcal{G}_L(\overline{\mathbb{F}}_p)$. If $E$, $E_n$ and $L=(a+b\tau)(a+b\bar{\tau})$ satisfy the conditions in Corollary 5.1, we denote the target elliptic curve of the two $L$-isogenies $\psi_{n,\pm}$ from $E_n$ by $E_n'$. In the remainder of this section, we will determine an upper bound on $p$ for which there exist unexpected isogenies from $E_n$ to $E_n'$ of degree $L$.\par
Since $\mathbb{Z}[\ell \tau]$ is optimally embedded in $\mathcal{O}_n$ and $L=(a+b\tau)(a+b\bar{\tau})$, we have that $I_{n,1}=(L,\ell(a+b\tau))$ and $I_{n,2}=(L,\ell(a+b\bar{\tau}))$ are the kernel ideals corresponding to $\psi_{n,+}$ and $\psi_{n,-}$. Moreover, we know that $I_{n,1}$ and $I_{n,2}$ are in the same ideal class if $L$ satisfies the conditions in Corollary 5.1. Let $D$ denote the absolute value of the discriminant of $\mathbb{Z}[\tau]$.
\begin{theorem}
  Suppose that $\ell$ is ramified or inert in $\mathbb{Z}[\tau]$ and $L$ satisfies the condition in Corollary 5.1. If $p>D\ell^2L$, then there are only two $L$-isogenies from $E_n$ to $E_n'$.
\end{theorem}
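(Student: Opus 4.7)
The plan is to argue by contradiction, following the Kaneko-style strategy used in the proof of Theorem 2. Suppose $\phi \colon E_n \to E_n'$ is an $L$-isogeny inequivalent to both $\psi_{n,+}$ and $\psi_{n,-}$; the aim is to produce in $\mathcal{O}_n$ an element $\beta$ of reduced norm $L$ with $\beta \notin \mathbb{Z}[\ell\tau]$. Then the two distinct imaginary quadratic orders $\mathbb{Z}[\ell\tau]$ (of absolute discriminant $\ell^2 D$) and $\mathbb{Z}[\beta]$ (of absolute discriminant at most $4L$) embedded in $\mathcal{O}_n$, together with Theorem 2 of \cite{Kan}, give $4p < 4\ell^2 DL$, contradicting $p > D\ell^2 L$.

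The first step is to verify that the endomorphism $\alpha := \hat{\psi}_{n,+}\circ\phi \in \mathcal{O}_n$ of reduced norm $L^2$ lies outside $\mathbb{Z}[\ell\tau]$. A direct computation yields $\ker(\alpha)\cap E_n[L] = \ker(\phi)$: the order-$L$ subgroups $\phi(E_n[L]) = \ker(\hat\phi)$ and $\psi_{n,+}(E_n[L]) = \ker(\hat\psi_{n,+})$ of $E_n'$ are distinct since $\phi \not\sim \psi_{n,+}$, hence intersect trivially in $E_n'$. Because $L$ splits in $\mathbb{Z}[\tau]$, and thus in $\mathbb{Z}[\ell\tau]$ (as $L \ne \ell$), the action of $[\ell\tau]$ on $E_n[L] \cong \mathbb{F}_L^2$ is diagonalisable, and the only $[\ell\tau]$-invariant subgroups of order $L$ are the two eigenlines $V_\pm = \ker\psi_{n,\pm}$. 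Every element of the commutative ring $\mathbb{Z}[\ell\tau]$ commutes with $[\ell\tau]$ and hence has $[\ell\tau]$-invariant kernel; since $\ker(\phi) \ne V_\pm$ by hypothesis, $\ker(\phi)$ is not $[\ell\tau]$-invariant, forcing $\alpha \notin \mathbb{Z}[\ell\tau]$.

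The main obstacle is the passage from $\alpha$ (of norm $L^2$) to the required $\beta$ (of norm $L$): applying Kaneko directly to $\mathbb{Z}[\alpha]$ yields only $|\mathrm{disc}(\mathbb{Z}[\alpha])| \le 4L^2$, hence the weaker bound $p < \ell^2 DL^2$, which does not contradict the hypothesis. The assumption that $\ell$ is inert or ramified in $\mathbb{Z}[\tau]$ enters precisely at this point: it guarantees that $\mathbb{Z}[\ell\tau]$ is saturated in $\mathcal{O}_n$ at the prime $\ell$, so that any additional left $\mathcal{O}_n$-ideal $I_3$ of reduced norm $L$ beyond $I_{n,1}$ and $I_{n,2}$ must, after forming the appropriate product with $\bar I_{n,1}$ (whose product with $I_3$ is a two-sided $\mathcal{O}_n$-ideal of norm $L^2$, necessarily equal to $L\mathcal{O}_n$) and performing a local analysis at $\ell$, produce a principal left $\mathcal{O}_n$-ideal of reduced norm $L$ with generator $\beta \in \mathcal{O}_n \setminus \mathbb{Z}[\ell\tau]$. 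Carrying out this extraction rigorously via quaternion ideal arithmetic is the heart of the proof, after which the Kaneko contradiction above completes it.
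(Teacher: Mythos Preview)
Your overall Kaneko-style strategy is correct, and you correctly isolate the main difficulty: the endomorphism $\hat\psi_{n,+}\circ\phi$ has reduced norm $L^2$ rather than $L$, which only yields $p<\ell^2DL^2$. However, your proposed resolution of this difficulty is both incomplete and flawed. The claim that $\bar I_{n,1}I_3$ is a two-sided $\mathcal{O}_n$-ideal equal to $L\mathcal{O}_n$ is incorrect on two counts. First, from $I_{n,1}=I_3\mu$ one only gets $\mathcal{O}_R(I_{n,1})=\mu^{-1}\mathcal{O}_R(I_3)\mu$, so the right order of $I_3$ is merely \emph{conjugate} to $\mathcal{O}_n'=\mathcal{O}_R(I_{n,1})$, and the product $\bar I_{n,1}I_3$ has no reason to be two-sided. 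Second, if it \emph{were} equal to $L\mathcal{O}_n'$, then multiplying on the left by $\bar I_{n,1}^{-1}=L^{-1}I_{n,1}$ would force $I_3=I_{n,1}$ outright --- which is too strong, since it would equally rule out $I_3=I_{n,2}$. So this line does not produce the desired norm-$L$ element, and the ``heart of the proof'' is genuinely missing.

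The paper's argument bypasses the obstacle by an explicit factorisation rather than abstract ideal arithmetic. From $I_{n,1}=J\mu$ with $\text{Nrd}(\mu)=1$ and $L\in J$, one has $L\mu\in I_{n,1}=\mathcal{O}_nL+\mathcal{O}_n\,\ell(a+b\tau)$, so $L\mu=xL+y\,\ell(a+b\tau)$ for some $x,y\in\mathcal{O}_n$. Using $L=(a+b\bar\tau)(a+b\tau)$ one rewrites
\[
L\mu=\bigl(x(a+b\bar\tau)+\ell y\bigr)(a+b\tau),
\]
and $\alpha:=x(a+b\bar\tau)+\ell y$ then has $\text{Nrd}(\alpha)=L$. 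Since $\ell\mathcal{O}_n\subset\mathcal{O}$ and $a+b\bar\tau\in\mathbb{Z}[\tau]\subset\mathcal{O}$, one obtains $\ell\alpha\in\mathcal{O}$. The inert/ramified hypothesis is now used, but not where you placed it: it guarantees that the only elements of $\mathbb{Z}[\tau]$ of norm $\ell^2$ are $\varepsilon\ell$ with $\varepsilon\in\mathbb{Z}[\tau]^*$, so that if $\ell\alpha\in\mathbb{Z}[\tau]$ then $\alpha\in\mathbb{Z}[\tau]$ with $\text{Nrd}(\alpha)=L$, forcing (up to units) $\alpha\in\{a+b\tau,\,a+b\bar\tau\}$ and hence $J\in\{I_{n,1},I_{n,2}\}$. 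Otherwise $\ell\alpha\notin\mathbb{Z}[\tau]$, and Kaneko's theorem is applied inside $\mathcal{O}$ (not $\mathcal{O}_n$) to the pair $\mathbb{Z}[\tau]$ and $\mathbb{Z}[\ell\alpha]$, of absolute discriminants $D$ and at most $4\ell^2L$, giving $4p\le 4D\ell^2L$.
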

\begin{proof}
    If there is another $L$-isogeny from $E_n$ to $E_n'$, the corresponding kernel ideal $J$ must belong to $X_L$ and $I_{n,1}=J\mu$, where $\mu\in B_{p,\infty}^*$ with $\text{Nrd}(\mu)=1$. Since $L \in J$, we have $L \mu \in I_{n,1}$ and $\mu \in L^{-1}I_{n,1}$. There exist $x,y \in \mathcal{O}_n$ such that $\mu=L^{-1}(xL+y\ell(a+b\tau))=L^{-1}(x(a+b\bar{\tau})+\ell
    y)(a+b\tau)$, and $\alpha=x(a+b\bar{\tau})+\ell y$ with $\text{Nrd}(\alpha)=L$. We have $\ell \alpha\in \mathcal{O}$ since $\ell x$, $\ell y$ and $a+b\bar{\tau}$ are in $\mathcal{O}$.\par
    If $\ell \alpha \in \mathbb{Z}[\tau]$, then $\alpha \in \ell^{-1}\mathbb{Z}[\tau]$. Since $\ell$ is ramified or inert in $\mathbb{Z}[\tau]$, the set of elements with norm $\ell^2$ in $\mathbb{Z}[\tau]$ is $\{\varepsilon \ell: \varepsilon \in \mathbb{Z}[\tau]^* \}$. For simplicity, we can assume $\alpha =a+ b\tau$ or $a+b\bar{\tau}$. If $\alpha=a+b\bar{\tau}$, then $\mu=1$ and $J=I_{n,1}$. If $\alpha= a+b\tau$, then $\mu=  \frac{a+b\tau}{a+b\bar{\tau}}$ and $J=I_{n,2}$. If $\ell \alpha$ is not in $\mathbb{Z}[\tau]$, by Theorem 2 in \cite{Kan}, we have $4p\leq 4D\text{Nrd}(\ell\alpha)=4D\ell^2L$ since $\mathbb{Z}[\tau]$ and $\mathbb{Z}[\ell \alpha]$ are embedded in $\mathcal{O}$. We assume $p> D\ell^2L$, so such $\alpha$ does not exist. This proves the theorem.
\end{proof}
The following examples show that the bound in Theorem 5 is sharp.
\begin{example}
If $p\equiv  3 \ \text{mod} \ 4$ and $\ell=2$, then $j(2i)=66^3$ is a supersingular $j$-invariant in $\mathbb{F}_p$ and $2$ is ramified in $\mathbb{Z}[2i]$. For $L=13=(3+2i)(3-2i)$ and $p=827$, $j_1=774\beta+169$, $j_2=53\beta+169$ and $j_3=1728$ are the vertices adjacent to $66^3$ in $\mathcal{G}_{2}(\overline{\mathbb{F}}_p)$ where $\beta^2+1=0$. There exist three $13$-isogenies from $j_1$ to $j_2$. In fact, $p=827$ is the largest prime satisfying $p\equiv \ 3 \ \text{mod} \ 4$ and $p<16 \times 2^2 \times 13=832$.\par
If $\left ( \frac{-7}{p} \right )=-1$ and $\ell=3$, then $j(\frac{1+\sqrt{-7}}{2})=-15^3$ is a supersingular $j$-invariant in $\mathbb{F}_p$ and $3$ is inert in $\mathbb{Z}[\frac{1+\sqrt{-7}}{2}]$. For $L=37=(3+2\sqrt{-7})(3-2\sqrt{-7})$ and $p=2309$, $j_1=860\beta+1506$ and $j_2=1449\beta+1506$ are two vertices adjacent to $-15^3$ in $\mathcal{G}_{3}(\overline{\mathbb{F}}_p)$ where $\beta^2+\beta+1=0$. There exist three $37$-isogenies from $j_1$ to $j_2$. In fact, $p=2309$ is the largest prime satisfying $\left ( \frac{-7}{p} \right )=-1$ and $p<7 \times 3^2 \times 37=2331$.
\end{example}

\section{Conclusion}

For a supersingular elliptic curve $E$, if an imaginary quadratic order $\mathbb{Z}[\tau]$ can be embedded in $\text{End}(E)$ and a prime $L$ splits into two principal ideals in $\mathbb{Z}[\tau]$, we construct loops or cycles in the supersingular $L$-isogeny graph at the vertices $\{ j(E_n) \}_{n=0,\ldots,\ell}$ which are neighbors of $j(E)$ in the $\ell$-isogeny graph, where $\ell$ is a prime different from $L$. If $\mathbb{Z}[\ell \tau]$ is optimally embedded in $\text{End}(E_n)$ and $L\mathbb{Z}[\ell \tau]=\mathfrak{L}\mathfrak{L}'$, then the length of each cycle which we construct at $j(E_n)$ is the order of $\mathfrak{L}$ in the class group of $\mathbb{Z}[\ell \tau]$ essentially. \par
If we walk two steps from $E$ in the supersingular $\ell$-isogeny graph, we can get $\ell(\ell+1)$ vertices in general and construct loops and cycles at these vertices in the $L$-isogeny graph by our method. In general, similar results hold for any number of steps. These results show a deeper connection between different supersingular isogeny graphs.\par

\small
\section*{}
\bibliographystyle{plain}

\bibliography{reference}

\end{document}